\definecolor{my-blue}{rgb}{0.0,0.0,0.8}
\definecolor{my-red}{rgb}{0.6,0.0,0.0}
\definecolor{my-green}{rgb}{0.0,0.5,0.0}
\definecolor{light-gray}{gray}{0.6} 
\definecolor{really-light-gray}{gray}{0.8}
\newtheorem{theorem}{Theorem}[section]
\newtheorem{proposition}[theorem]{Proposition}
\newtheorem{lemma}[theorem]{Lemma}
\theoremstyle{definition}
\newtheorem{definition}[theorem]{Definition}
\theoremstyle{remark}
\newtheorem{remark}[theorem]{Remark}
\numberwithin{equation}{section}
\newcommand{\R}{\mathbb{R}}
\newcommand{\N}{\mathbb{N}}
\newcommand{\E}{\mathbb{E}}
\newcommand{\mx}{\mathbf{x}}
\newcommand{\my}{\mathbf{y}}
\newcommand{\ms}{\mathbf{s}}
\newcommand{\mz}{\mathbf{z}}
\newcommand{\mB}{\mathbf{B}}
\newcommand{\Z}{\mathbb{Z}}
\newcommand{\tZ}{\widetilde{Z}}
\newcommand{\1}[1]{\mathbf{1} \left \{ #1 \right \}} 
\begin{document}

\begin{frontmatter}

\title{\bf Busemann functions and semi-infinite  O'Connell-Yor polymers}

\runtitle{Busemann functions and semi-infinite O'Connell-Yor polymers}

\begin{aug}
\author{\fnms{Tom} \snm{Alberts}\thanksref{a}\corref{}\ead[label=e1]{alberts@math.utah.edu}},
\author{\fnms{Firas} \snm{Rassoul-Agha}\thanksref{a}\ead[label=e2]{firas@math.utah.edu}}
\and
\author{\fnms{Mackenzie} \snm{Simper}\thanksref{b}\ead[label=e3]{msimper@stanford.edu}}
\address[a]{Department of Mathematics, University of Utah,
155 S 1400 E Rm 233, SLC, UT, USA 84112
\printead{e1};\printead{e2}}
\address[b]{Department of Mathematics, Stanford University,
Building 380, Stanford, CA, USA 94305
\printead{e3};}

\runauthor{T. Alberts, F. Rassoul-Agha, M. Simper}

\affiliation{University of Utah and Stanford University}

\end{aug}

\begin{abstract}
We prove that given any fixed asymptotic velocity, the finite length O'Connell-Yor polymer has an infinite length limit satisfying the law of large numbers with this velocity. By a Markovian property of the quenched polymer this reduces to showing the existence of \textit{Busemann functions}: almost sure limits of ratios of random point-to-point partition functions. The key ingredients are the Burke property of the O'Connell-Yor polymer and a comparison lemma for the ratios of partition functions. We also show the existence of infinite length limits in the Brownian last passage percolation model.
\end{abstract}

\begin{keyword}
\kwd{O'Connell-Yor polymer}
\kwd{Busemann functions}
\kwd{semi-infinite quenched path measures}
\end{keyword}



\end{frontmatter}

\textit{MSC2010 Classification:} 60J27, 60J65, 60K35

\section{Introduction \label{sec:intro}}

Among the class of $(1+1)$-dimensional exactly solvable statistical mechanics models in the Kardar-Parisi-Zhang (KPZ) universality class, the O'Connell-Yor polymer has several notable features that make it particularly fertile for analysis. First, the random environment is made up of independent copies of Brownian motion and hence inherits all of its rich features. At the same time, the polymer paths are Poisson-like processes which only make jumps of size $1$ at a discrete set of random times, whose law is determined by the random environment. This discreteness makes analysis of the path measures much simpler than fully continuous objects such as the continuum directed random polymer \cite{AKQ:CDRP, AKQ:IDR}, while at the same time retaining all of the salient features.

In this paper we prove an existence result for O'Connell-Yor polymers of infinite length. The quenched measures on finite length paths are defined by the standard Gibbs measure formulation (see Definition \ref{defn:p2l}), but it is unclear how to directly define an infinite length version that is compatible with the finite length measures. The primary hurdle is the non-consistency of the quenched measures on paths of increasing length, which renders useless standard tools such as the Kolmogorov consistency theorem. Nonetheless, an almost sure limit of the finite length path measures can still be taken thanks to three main ideas: expressing the transition probabilities for the finite length paths as a ratio of random partition functions, proving a particular comparison lemma for ratios of deterministic partition functions (in our case this is an extension of the comparison lemma in \cite{SV:ALEA2010}), and then using the stationarity property of the O'Connell-Yor polymer (based on the  Matsumoto-Yor property \cite{MY:stationarity}) to obtain stationary ratios that bound the ratios of interest. Finally, a monotonicity property of the stationary ratios gives a ``squeeze'' type proof for the existence of the limiting non-stationary ratios.
These limits are taken for point-to-point paths with endpoint moving at a fixed asymptotic velocity, the velocity being the ratio of the spatial displacement to the temporal one.  Such limits of ratios of partition functions are commonly referred to as \textit{Busemann functions}, and our main result is that for each fixed asymptotic velocity there exists a family of random Busemann functions indexed by the points of $(\Z \times \R)^2$. These are measurable functions of the Brownian motions ``ahead'' of the indexing point (in the componentwise ordering on $\Z \times \R$) and satisfy the \textit{cocycle property}. We also identify the limiting distribution of the Busemann function between any two fixed space-time points and for a fixed velocity.

Our results on the existence of Busemann functions can also be interpreted in terms of existence and uniqueness of global stationary solutions and pull-back attractors of a semi-discrete approximation of the stochastic Burgers equation
\begin{align}\label{Burgers}
\partial_t u=\tfrac\nu2 \partial_{xx}u+u\partial_x u+\partial_x\dot W,
\end{align}
where $\dot W$ is standard space-time white noise and $\nu\ge0$ is the viscosity parameter.
See Remark \ref{rk:RDS}.

This paper is made up of three subsequent sections and a short appendix. In the next section we recall the basic features of the O'Connell-Yor polymer and state our main results. In Section \ref{sec:Busemann} we prove that the limits of ratios of partition functions exist, using a comparison lemma and the explicitly known stationary version of the O'Connell-Yor polymer. The article \cite{GRSY:log_gamma} used this method for the directed log-gamma polymer \cite{Sepp:log_gamma}, where the stationary model is also explicit. See also \cite{DH:FPP_Busemann,DH:bigeodesics,GRS:corner_growth_cocycles,GRS:corner_growth_geodesics,RJ:DLR_Busemann}, which use ideas from this method in discrete-time discrete-space models outside the exactly solvable class. Our results are somewhat more complicated than these works in that we have to prove an almost sure tightness of the quenched path measures, which is necessary in our case because of the continuous time parameter but comes for free in the fully discrete setting. 

The present work is the first proof of existence of Busemann functions in a non fully discrete setting that makes use of the stationary boundary model. The works \cite{BCK:Burgers,CP:LPP_Busemann,CP:Hammersley,Bak:kick_forcing,BL:thermo_limit} also prove existence of Busemann functions in various related models, but using a different method based on path-straightness estimates, pioneered by Newman and coauthors \cite{LN:GeoFPP,HN:Euclidean}. Other novel features of the present work include a new version of the comparison lemma for ratios of partition functions, and in contrast to \cite{GRSY:log_gamma} our proof has been streamlined to avoid using the complicated time reversal property of the stationary polymer. This allows us to work with one stationary boundary condition throughout, instead of having to constantly switch between ``south-west'' and ``north-east'' boundary conditions as in \cite{GRSY:log_gamma}. Within Sections \ref{sec:OCY} and \ref{sec:Busemann} our main result is Theorem \ref{thm:ratios_of_part_funcs}, which precisely states existence of Busemann functions and the existence of infinite length O'Connell-Yor polymers. Its proof is broken into many subsequent steps, with the main contributions being Theorem \ref{thm:almost_sure_check_comparison} which bounds limits of ratios of partition functions in the stationary regime, and Lemma \ref{lm:comparison} which gives deterministic bounds on ratios of partition functions in finite size boxes that hold for any underlying environment field. Section \ref{sec:LPP} is an extension of the results in Sections \ref{sec:OCY} and \ref{sec:Busemann} that proves the existence of the infinite length Brownian last passage percolation model. Finally, an appendix uses ideas from large deviations to prove a local shape theorem for the free energy of directed O'Connell-Yor polymers within a prescribed range of asymptotic velocities, which is an extension of the free energy result of \cite{MO:free_energy}. \bigskip

\noindent \textbf{Notation.} We let $\Z_+$ denote the non-negative integers $\{0,1,2,\ldots \}$ and $\Z$ for the full set of integers. We use $\N$ for the positive integers. Points $(n,t) \in \Z \times \R$ will be used to denote the space-time locations of various jump processes, and we use the componentwise ordering $(m,s) \leq (n,t)$ on such points. For integers $m \leq n$ placeholder variables for the jump times of the process from $m$ to $n$ are denoted by
\[
\ms_{m,n} = (s_m, s_{m+1}, \ldots, s_n) \in \R^{n-m+1}.
\]
For points $(m,s), (n,t)$ with $(m,s) \leq (n,t)$ we define the ordered subsets of times
\begin{align*}
&\Pi_{(m,s), (n,t)} = \{ \ms_{m,n} : s = s_m < s_{m+1} < \ldots < s_n = t \}\quad\text{and}\\
&\Pi_{(m,s)} = \{ \ms_{m,n} : s = s_m < \ldots < s_n \}.
\end{align*}
We use $\1{A}$ to denote the indicator function of an event $A$. For functions $f : \R \to \R$ we frequently use the notation $f(s,t) = f(t) - f(s)$ for increments, without assuming that $s \leq t$. Recall the gamma function $\Gamma(\alpha) = \int_0^\infty x^{\alpha-1} e^{-x} \, dx$ and the digamma function defined by $\Psi_0 = \Gamma'/\Gamma$. We will also make use of the trigamma function $\Psi_1 = \Psi_0'$. \newline

\section{The O'Connell-Yor Polymer and Main Results \label{sec:OCY}}

In this section we define the polymer model. The paths of the polymer are c\`adl\`ag processes taking values in $\Z$  and with jump size always equal to one; in this way they are similar to the counting process for a Poisson point process on $\R$. We denote the jump times by $\{ \tau_j \}_{j \in \Z}$, where $\tau_j$ represents the time at which the process jumps from site $j$ onto site $j+1$. Thus if $x : (-\infty, \infty) \to \Z$ is the path then $\tau_j$ is the time at which $x(\tau_j-) = j$ and $x(\tau_j) = j+1$. Such processes are fully determined by their jump times since they are constant in between. Consequently, it is enough to specify the joint law of the jump times to fully determine the law of the paths. For the O'Connell-Yor polymer this is done by introducing a collection of independent two-sided Brownian motions $\mB = ( B_i(t), t \in \R )_{i \in \Z}$ to act as a random environment that the jump process interacts with. One may assume throughout that $B_i(0) = 0$ but we will only be concerned with increments of the $B_i$, so such normalizations are typically irrelevant. Recall that we denote the increments by $B_i(s,t) = B_i(t) - B_i(s)$. With this notation in hand the point-to-point version of the partition function of the O'Connell-Yor polymer is defined as follows.

\begin{definition}\label{defn:p2l}
Let $(m,s), (n,t) \in \Z \times \R$ with $(m,s) < (n,t)$. The \emph{point-to-point partition function} is defined as
\begin{align*}
Z_{(m,s),(n,t)}(\mB) &= \int e^{ \sum_{k=m}^n B_k(s_{k-1}, s_{k}) } \1{s = s_{m-1} < s_m < \ldots < s_{n-1} < s_n = t} \, d \ms_{m,n-1} \\
&= \int e^{ \sum_{k=m}^n B_k(s_{k-1}, s_{k}) } \1{\ms_{m-1,n} \in \Pi_{(m-1,s),(n,t)}} \, d \ms_{m,n-1}.
\end{align*}
When the Brownian motions in use are clear from context, we may suppress the notation and simply write $Z_{(m,s),(n,t)}$ instead of $Z_{(m,s),(n,t)}(\mB)$. In the case $m=n$ with $s \leq t$ we write
\[
Z_{(m,s),(m,t)} = e^{B_m(s,t)}.
\]
A quick computation using the definitions shows that 
these partition functions satisfy the supermultiplicativity property:
\begin{align}\label{supermult}
Z_{(m,s),(\ell,r)}Z_{(\ell,r),(n,t)}\le Z_{(m,s),(n,t)}
\end{align}
for all $(m,s)\le(\ell,r)\le(n,t)$ in $\Z\times\R$.

Given the partition function, the quenched polymer measure on $\Pi_{(m-1,s),(n,t)}$ is
\[
Q_{(m,s),(n,t)}^{\mB}( \tau_m \in ds_m, \ldots, \tau_{n-1} \in ds_{n-1} ) = \frac{1}{Z_{(m,s),(n,t)}(\mB)} e^{ \sum_{k=m}^n B_k(s_{k-1}, s_{k}) } \, d \ms_{m,n-1}.
\]
Note that this law only depends on the Brownian motions $B_m, \ldots, B_n$. Further note that under this measure the variables $\tau_{m-1}$ and $\tau_n$ are fixed at $s$ and $t$, respectively, while $\tau_m, \ldots, \tau_{n-1}$ are random.  
\end{definition}

\begin{remark}
The definition of the partition functions implies that they satisfy a supermultiplicativity property: if $(m,s) \leq (k,r) \leq (n,t)$ then $Z_{(m,s), (k,r)} Z_{(k,r),(n,t)} \leq Z_{(m,s),(n,t)}$. We will also occasionally use that the expected value of the partition function is given by
\[
\E[Z_{(m,s),(n,t)}(\mB)] = e^{(t-s)/2} \frac{(t-s)^{n-m}}{(n-m)!},
\]
which follows from Fubini's theorem.
\end{remark}

\begin{remark}
One could introduce an inverse temperature parameter $\beta$ into the above definition, but the Brownian scaling property $B_k^{\beta}(\cdot) := \beta B_k(\beta^{-2} \cdot) \equiv B_k(\cdot)$ (here $\equiv$ means equality in law) gives 
\[
\beta^{2(n-m)} Z_{(m,s),(n,t)}(\beta \mB) = Z_{(m,\beta^2 s), (n, \beta^2 t)} (\mB^{\beta}) \equiv Z_{(m, \beta^2 s), (n, \beta^2 t)}(\mB).
\]
From this equality and Definition \ref{defn:p2l} it follows that 
\[
(\tau_m, \ldots, \tau_{n-1}) \sim Q_{(m,s), (n,t)}^{\beta \mB} \iff (\beta^2 \tau_m, \ldots, \beta^2 \tau_{n-1}) \sim Q_{(m, \beta^2 s), (n, \beta^2 t)}^{\mB^{\beta}}.
\]
Now since $\tau_k(X(\beta^{-2} \cdot)) = \beta^{2} \tau_k(X(\cdot))$, the latter implies that 
\[
X \sim Q_{(m,s), (n,t)}^{\beta \mB} \iff X(\beta^{-2} \cdot) \sim Q_{(m,\beta^2 s), (n, \beta^2 t)}^{\mB^{\beta}}. 
\]
For these reasons we choose to keep $\beta = 1$ throughout.


\end{remark}

Note that the quenched measure determines the process by fully specifying the joint law of all of the jump times $\{ \tau_k \}_{k=m}^{n-1}$. The next lemma shows that all marginal measures of this joint law can be expressed as ratios of partition functions.  

\begin{lemma}\label{lem:quenched_marginals}
Under \(Q^{\mB}_{(m,s), (n,t)} \), the marginals of \( \{\tau_k \}_{k = m}^{n-1} \)  are a product of point-to-point partition functions. More precisely, for integers $m \leq k_1 < k_2 < \ldots < k_{\ell} \leq n-1$ the law of $(\tau_{k_1}, \ldots, \tau_{k_{\ell}})$ under $Q^{\mB}_{(m,s),(n,t)}$ is
\begin{equation} \label{eqn: marginalDist} \frac{Z_{(m,s), (k_1, s_1)}(\mB) \cdot \prod_{i = 1}^{l-1} Z_{ (k_i + 1, s_i), (k_{i+1}, s_{i+1})}(\mB)  \cdot Z_{(k_{\ell} + 1, s_{\ell}), (n,t)}(\mB)}{Z_{(m,s), (n,t)}(\mB) } \, ds_{1} \ldots s_{\ell},
\end{equation}
on the chamber $\{ s < s_1 < \ldots < s_{\ell} < t \}$. Consequently, all conditional densities of one subset of the jump times given another subset can also be expressed as ratios of partition functions. In particular, under the quenched point-to-point measure the jump times $\tau_k$ form a Markov process with transition densities given by
\begin{equation} \label{eqn: condDist}
\begin{split}
&Q^{\mB}_{(m,s), (n,t)} \left( \tau_{k_i} \in ds_i \vert  \tau_{k_{i-1}} \in ds_{i-1} \right)\\ 
&\qquad\qquad= \frac{   Z_{(k_{i-1}+1, s_{i-1}), (k_i, s_i)}(\mB) \cdot Z_{(k_i + 1, s_i), (n,t)}(\mB)}{Z_{(k_{i-1}+1, s_{i-1}), (n,t)}(\mB) } \1{s_{i-1} < s_i} \, ds_i.
\end{split}
\end{equation}
\end{lemma}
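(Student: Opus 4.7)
The plan is to compute the joint density of $(\tau_{k_1}, \ldots, \tau_{k_\ell})$ under $Q^{\mB}_{(m,s),(n,t)}$ by directly integrating out the non-selected jump times in the density from Definition \ref{defn:p2l}, and then to read off the Markov property and transition density from the resulting product structure.

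For the marginal, I adopt the placeholder convention $k_0 = m-1$, $s_0 = s$, $k_{\ell+1} = n$, $s_{\ell+1} = t$ so that the boundary times fit into the same pattern as the selected jumps. Fixing $\tau_{k_i} = s_i$ for $i = 1, \ldots, \ell$, the chamber constraint $s = s_{m-1} < s_m < \cdots < s_{n-1} < s_n = t$ decouples into the $\ell+1$ independent chamber constraints $s_{k_i} < s_{k_i+1} < \cdots < s_{k_{i+1}-1} < s_{k_{i+1}}$ on the non-selected integration variables within segment $i$. Simultaneously, the exponent factors as
\[
\sum_{k=m}^n B_k(s_{k-1}, s_k) = \sum_{i=0}^\ell \sum_{k=k_i+1}^{k_{i+1}} B_k(s_{k-1}, s_k),
\]
where the $i$-th inner sum involves only the Brownian motions $B_k$ for $k_i < k \le k_{i+1}$ and the variables $s_k$ for $k_i < k < k_{i+1}$. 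By Fubini, the iterated integral over the non-selected jump times splits as a product, and by Definition \ref{defn:p2l} the $i$-th factor is exactly $Z_{(k_i+1, s_i),(k_{i+1}, s_{i+1})}$ (in the degenerate case $k_{i+1} = k_i + 1$ this reduces to the convention $e^{B_{k_i+1}(s_i, s_{i+1})}$). Repackaging the boundary segments $i=0$ and $i=\ell$ produces exactly formula (\ref{eqn: marginalDist}).

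The Markov property and the transition density (\ref{eqn: condDist}) then follow by bookkeeping. Applying the same integration formula to the subset $\{k_1, \ldots, k_{i-1}\}$ yields the marginal of $(\tau_{k_1}, \ldots, \tau_{k_{i-1}})$, which differs from the joint density on $(\tau_{k_1}, \ldots, \tau_{k_i})$ only in the factors attached to the rightmost segment. Dividing, almost every partition function cancels, leaving the conditional density
\[
\frac{Z_{(k_{i-1}+1, s_{i-1}),(k_i, s_i)} \cdot Z_{(k_i+1, s_i),(n,t)}}{Z_{(k_{i-1}+1, s_{i-1}),(n,t)}}\, \1{s_{i-1} < s_i}\, ds_i,
\]
which depends on $(s_1, \ldots, s_{i-1})$ only through $s_{i-1}$; this simultaneously yields (\ref{eqn: condDist}) and the Markov property. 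There is no real analytic difficulty: the entire argument is Fubini plus careful index tracking, with the only mild subtlety being the degenerate single-site segments, which are absorbed cleanly by the $m=n$ convention in Definition \ref{defn:p2l}.
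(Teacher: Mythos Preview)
Your proof is correct and is exactly the ``straightforward exercise using the definition of the partition functions'' that the paper alludes to in lieu of a proof; there is nothing to compare.
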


The proof of Lemma \ref{lem:quenched_marginals} is a straightforward exercise using the definition of the partition functions. An equivalent statement is that under $Q_{(m,s),(n,t)}^{\mB}$ the quenched path process $X$ is a continuous-time Markov chain that starts at $X(s) = m$ and makes jumps of size one, with inhomogeneous space-time rate
\[
\frac{Z_{(X(u)+1, u), (n,t)}(\mB)}{Z_{(X(u), u),(n,t)}(\mB)},
\]
until it hits level $n$. Using these descriptions, the problem of showing that limits of quenched path measures exist for the O'Connell-Yor polymer reduces to proving that certain ratios of partition functions have limits as $n \to \infty$. For integers $m \leq k_1 < k_2 < \ldots < k_{\ell}$, we want to show that the law of
$(\tau_{k_1}, \ldots, \tau_{k_\ell})$ under $Q_{(m,s), (n, n \theta)}^{\mB}$ converges as $n \to \infty$, and does so almost surely with respect to the Brownian motions $\mB$. By \eqref{eqn: marginalDist}, it is sufficient to show existence of the almost sure limit
\[
\lim_{n \to \infty} \frac{Z_{(k_{\ell}+1, s_{\ell}), (n,n \theta)}(\mB)}{Z_{(m,s), (n,n \theta)}(\mB)}.
\]
Our main theorem proves exactly this.

\begin{theorem}\label{thm:ratios_of_part_funcs}
Fix $\theta > 0$ and $\mx,\my\in\Z\times\R$. Then with probability one there exists a limit of the ratio of point-to-point partition functions starting from $\mx$ and $\my$, i.e.
\begin{align}\label{eqn:Busemann_exist_statement}
\lim_{n \to \infty} \frac{Z_{\mx,(n, t_n)}(\mB)}{Z_{\my,(n, t_n)}(\mB)} =: e^{\mathcal{B}^{\theta}(\mx, \my)}
\end{align}
exists almost surely and is independent of the choice of the sequence  $\{t_n \}$ in $\R$, so long as $t_n/n\to\theta$. Furthermore, for each fixed $(m,s)$ the limit of $Q_{(m,s), (n,t_n)}^{\mathbf{B}}$ {\rm(}in the sense of weak convergence of measures on the Skorohod space $\mathcal{D}[m, \infty)${\rm)} exists almost surely as $n \to \infty$. Under the limiting measure the quenched path process $X$ is a continuous-time Markov chain that starts at $X(s) = m$ and makes jumps of size one, with inhomogeneous space-time rate 
\[
e^{-\mathcal{B}^{\theta}((X(u), u), (X(u)+1,u))}
\]
at time $u$. 
\end{theorem}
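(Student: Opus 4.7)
The plan is to reduce the entire theorem to the almost sure existence of the Busemann limit \eqref{eqn:Busemann_exist_statement}, and then read off the convergence of the quenched measures and their description as Markov chains from Lemma \ref{lem:quenched_marginals}. For the ratio limit I would follow the stationary-sandwich strategy advertised in the introduction. Parameterize the stationary O'Connell-Yor polymer by a drift $\lambda$ of the boundary Brownian motions built from the Matsumoto–Yor/Burke property, so that the associated asymptotic velocity is a monotone continuous function of $\lambda$. Fix $\lambda=\lambda(\theta)$ so that this velocity equals $\theta$. For any $\lambda_-<\lambda<\lambda_+$, the deterministic comparison lemma (Lemma \ref{lm:comparison}) will sandwich $Z_{\mx,(n,t_n)}(\mB)/Z_{\my,(n,t_n)}(\mB)$ between two explicit ratios taken in the stationary models with parameters $\lambda_\pm$. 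Theorem \ref{thm:almost_sure_check_comparison} will identify the almost sure limits of these stationary ratios as functions of the boundary increments, and these limits will be monotone and continuous in $\lambda_\pm$, coinciding in the limit $\lambda_\pm\to\lambda$. The resulting squeeze yields the almost sure existence of $e^{\mathcal{B}^{\theta}(\mx,\my)}$. The argument depends on the sequence $\{t_n\}$ only through the asymptotic velocity $\theta$, so the same bounds applied to any two sequences $t_n,t'_n$ with the same limit ratio produce the same value, proving independence.

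Granted the Busemann limits, Lemma \ref{lem:quenched_marginals} expresses every finite-dimensional marginal of $Q^{\mB}_{(m,s),(n,t_n)}$ as a product of ratios of point-to-point partition functions, each of which is of the form in \eqref{eqn:Busemann_exist_statement} (after telescoping a common denominator). Passing to the $n\to\infty$ limit using the first step gives almost sure convergence of the joint densities on each chamber $\{s<s_1<\cdots<s_\ell\}$. Since the definition of $\mathcal{B}^{\theta}$ as the logarithm of a ratio immediately implies the cocycle identity $\mathcal{B}^{\theta}(\mx,\mz)=\mathcal{B}^{\theta}(\mx,\my)+\mathcal{B}^{\theta}(\my,\mz)$, the limiting finite-dimensional densities factor into the product form characteristic of an inhomogeneous continuous-time Markov chain on $\Z$ that starts at $(m,s)$, makes only upward unit jumps, and has rate $e^{-\mathcal{B}^{\theta}((k,u),(k+1,u))}$ at level $k$ and time $u$. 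This identifies the candidate limiting measure.

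The main technical obstacle is upgrading finite-dimensional convergence to weak convergence in the Skorohod space $\mathcal{D}[m,\infty)$. Since paths are $\Z$-valued nondecreasing c\`adl\`ag with unit jumps, the only thing that can fail is tightness of $X(T)-m$ under $Q^{\mB}_{(m,s),(n,t_n)}$ for fixed $T>s$, almost surely in $\mB$. The plan is to obtain this tightness by yet another application of the comparison lemma: the quenched rate $Z_{(X(u)+1,u),(n,t_n)}/Z_{(X(u),u),(n,t_n)}$ can be dominated uniformly in $n$ by the analogous ratio in a stationary model with a slightly larger drift, whose almost sure limit is finite by Theorem \ref{thm:almost_sure_check_comparison}. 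This dominates the quenched jump process by a Poisson-type counting process with finite rate, giving uniform control on $X(T)-m$ and hence tightness. In the fully discrete models of \cite{GRSY:log_gamma,GRS:corner_growth_cocycles} this step is automatic because the number of steps on a fixed time interval is deterministic, which is why the authors flag tightness as the principal new difficulty here. Once tightness is in hand, combining it with the finite-dimensional convergence yields the claimed weak convergence, and the Markov description and explicit rate have already been identified from the limiting finite-dimensional distributions.
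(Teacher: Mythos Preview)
Your overall architecture is right, but the squeeze step as you describe it has a genuine gap. The almost sure inequalities of Theorem~\ref{thm:almost_sure_check_comparison} are for ratios built from the dual field $\check{\mB}^{\lambda}$, not from $\mB$. One cannot simultaneously realize $\mB$ as $\check{\mB}^{\lambda_-}$ and $\check{\mB}^{\lambda_+}$ on the same probability space, so there is no almost sure sandwich of the $Z(\mB)$-ratio between stationary quantities at two different drifts. What the paper does instead is use the distributional identity $\check{\mB}^{\lambda}\equiv\mB$ to convert Theorem~\ref{thm:almost_sure_check_comparison} into a \emph{stochastic domination} statement: for each $\lambda$ with $\Psi_1(\lambda)>\theta$ one gets $\limsup_n Z_{(0,0),(n,n\theta)}/Z_{(1,0),(n,n\theta)}\lesssim e^{r_1^{\lambda}(0)}$, and similarly a lower stochastic bound on the $\liminf$ for $\lambda$ with $\Psi_1(\lambda)<\theta$. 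Sending $\lambda\to\lambda_\theta=\Psi_1^{-1}(\theta)$ from each side (using stochastic monotonicity of the $\operatorname{Gamma}(\lambda,1)$ family) yields $\limsup\lesssim e^{r_1^{\lambda_\theta}(0)}\lesssim\liminf$, and since a $\liminf$ can stochastically dominate its own $\limsup$ only if they coincide almost surely, the limit exists. Your phrasing (``almost sure limits\ldots monotone and continuous in $\lambda_\pm$, coinciding in the limit'') reads as an almost sure squeeze and omits this distributional transfer, which is the crux of the argument.

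Your tightness plan also differs from the paper's and, as written, would not go through. Dominating the quenched rate by the stationary rate produces only the random quantity $e^{-r_{k+1}^{\lambda}(u)}$, whose marginal is reciprocal-Gamma and hence unbounded in $(k,u)$; this does not yield a Poisson domination with finite constant rate. Moreover, controlling $X(T)-m$ alone does not prevent two jumps from coalescing in the limit, which is precisely what must be ruled out to obtain a unit-jump Markov chain as the weak limit. The paper's Section~\ref{sec:tightness} takes a different route: it bounds $Q^{\check{\mB}^{\lambda}}_{(0,0),(n,t_n)}(\tau_{k+1}-\tau_k<\delta,\,\tau_k<T)$ directly by $\delta$ times three factors---a Brownian oscillation term $e^{A_k}$, a first-moment bound on $\int_0^T Z_{(0,0),(k,x)}\,dx$ (decaying like $T^k/k!$), and the ratio $Z_{(k+2,0),(n,t_n)}/Z_{(0,0),(n,t_n)}$, the last controlled via the comparison lemma and the Burke identity $\prod_i e^{-r_i^{\lambda}(0)}$. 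Borel--Cantelli applied to each factor then gives a bound $C(\omega)\,\delta$ uniform in $k<n$, which is the required modulus estimate.
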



The limit $\mathcal{B}^{\theta}(\mx, \my)$ is referred to as the Busemann function between $\mx$ and $\my$, corresponding to the velocity $\theta$. By construction it satisfies the cocycle property
\[
\mathcal{B}^{\theta}(\mx, \my) = \mathcal{B}^{\theta}(\mx, \mz) + \mathcal{B}^{\theta}(\mz, \my) 
\]
for any $\mz \in \Z \times \R$. 


\begin{remark}
Note that we do not require that $\mx, \my$ are ordered to prove the existence of the limit. However, as we will now demonstrate, it is sufficient to show existence of the almost sure limits of the type
\[
\lim_{n \to \infty} \frac{Z_{(1,0), (n,n \theta)}(\mB)}{Z_{(0,0), (n,n \theta)}(\mB)} \quad \textrm{and} \quad \lim_{n \to \infty} \frac{Z_{(0,t), (n,n \theta)}(\mB)}{Z_{(0,0), (n, n \theta)}(\mB)}.
\]
First, for any choice of $\mx$, $\my$ at least one of the points can always be translated back to the origin by translating the field of Brownian motions in the same way. Furthermore, by introducing extra terms any ratio can be expressed as a product of a sequence of ratios in which all ``starting points'' for the $Z$ are vertically or horizontally aligned. Second, the specific choice of $t_n = n \theta$ is sufficient because once it is proved for constant velocities a comparison principle and a squeeze type theorem can be used to show that it holds for all arbitrary sequences with a fixed asymptotic velocity; this is done in Section \ref{sec:subsq}. Finally, proving almost sure convergence of the path measures $Q_{(m,s),(n,t)}^{\mB}$ requires showing that adjacent jumps do not, with positive probability, merge into a single jump of size larger than one. This is done in Section \ref{sec:tightness}, using standard modulus-of-continuity estimates for Brownian motion and a first moment formula for the random partition function.
\end{remark}


\begin{remark}
The existence results of Theorem \ref{thm:ratios_of_part_funcs} can also be cast in a statistical mechanics framework, where the semi-infinite quenched path measures we construct via Busemann functions correspond to semi-infinite Gibbs measures that are consistent with the finite path point-to-point polymer measures $Q_{(m,s),(n,t)}^{\mB}$. See \cite{RJ:DLR_Busemann} for details in the polymer model on $\Z^2$. 
\end{remark}

\begin{remark}
\label{rk:RDS}
A direct differentiation shows that for any $m \in \Z$ the partition functions $Z_{(m,0),(n,t)}(\mB)$  solve the infinite system of coupled stochastic differential equations (SDEs)
    \begin{align}\label{semi-disc}
    dZ_{(m,0),(n,t)}(\mB)=Z_{(m,0),(n-1,t)}(\mB) \, dt + Z_{(m,0),(n,t)}(\mB) \, dB_n(t), \quad n > m,\ t\in\R.
    \end{align}
By \cite{MN:Intermediate} we know that after an appropriate scaling,  $\{Z_{(0,0),(n,t)}:n\in\Z,t\in\R\}$ converges weakly to the solution of the stochastic heat equation (SHE)
    \[\partial_t\mathcal Z=\tfrac\nu2\partial_{xx}\mathcal Z+\tfrac1\nu\mathcal Z\dot W\]
with $\delta_0$ initial condition. Then, 
$\nu\partial_x\log\mathcal Z$ is the Hopf-Cole solution of the stochastic viscous Burgers equation \eqref{Burgers}. Thus, equations \eqref{semi-disc} can be thought of as a semi-discrete approximation of the stochastic Burgers equation \eqref{Burgers}. 
The results of Theorem \ref{thm:ratios_of_part_funcs}
can  be  reinterpreted as results on the existence and uniqueness of
global skew-invariant solutions and pull-back attractors, as $m \to -\infty$, of the random dynamical system given by the above system of coupled SDEs. For a similar situation, see for example \cite[Theorem 3.2]{Bak:kick_forcing}, \cite[Theorem 3.8]{RJ:DLR_Busemann}, and the forthcoming \cite{RJ:1F1S}. 
\end{remark}

\subsection{The Burke Property of the O'Connell-Yor Polymer} \label{sec: stationaryModel}

Here we recall the results of \cite{OCY:SPA2001} and use them to define the stationary version of our polymer model. The setup we use is borrowed from \cite{SV:ALEA2010}, which gives a fuller description of the stationary situation. The stationary model is created by using the initial Brownian motion $B_0$ in a particular way. One may think of its increments as being the increments of the Busemann functions for polymers coming from infinitely far in the past, but obviously we cannot define it this way without first knowing that Busemann functions exist. Instead we use its exponential as the initial condition for the stochastic heat equation with multiplicative noise, which we define next and from it define the stationary model. 

\begin{definition}\label{defn:stationary_Z}
For \(N \in \Z_+ \), \(T \in \R \), and $\lambda > 0$ define the field  \(\tZ^\lambda_{(N, T)} \) by the recursion relation
\begin{align*}\label{defn:stationary_Z}
\tZ^\lambda_{(N, T)}  = \int_{- \infty}^T e^{B_N(u, T)} \tZ^\lambda_{(N-1, u)} \lambda e^{-\lambda(T - u)} \, du,
\end{align*}
for \(N > 0 \), and for $N=0$ use the initial condition
\[
\tZ^{\lambda}_{(0,T)} = e^{-B_0(T)}. 
\]
\end{definition}

Note this recursion means that $\tZ^{\lambda}$ is the Feynman-Kac solution to the stochastic heat equation for a Poisson process interacting with the Brownian motions, in the form of a multiplicative noise.

The ratios of these partition functions play an important role, so we set special notation for the ratios in both the space and time directions.

\begin{definition}\label{defn:r_g_check}
Fix a $\lambda > 0$. For $N \in \N$ and $T \in \R$ set
\begin{align}\label{eqn:r_g_check_1}
r_N^{\lambda}(T) = \log \tZ^\lambda_{(N,T)} - \log \tZ^\lambda_{(N-1,T)}\quad\text{and} \quad g_N^{\lambda}(T) = - \log \tZ^\lambda_{(N,T)}. 
\end{align}
Note that by taking differences of the function $r_N^{\lambda}$ we are led to the recursion relation
\begin{align}\label{eqn:g_recursion}
g_N^{\lambda}(S,T) = g_{N-1}^{\lambda}(S,T) - r_N^{\lambda}(S,T), \quad N \in \N.
\end{align}
Here $g_0^{\lambda}(S,T) = B_0(S,T)$, which is consistent with \eqref{eqn:r_g_check_1} and the definition of $\tilde{Z}^{\lambda}_{(0,T)}$. Finally, we define a new sequence of fields $\check{B}^{\lambda}$ by
\[
\check{B}^{\lambda}_{N-1}(S,T) = B_N(S,T) - r_N^{\lambda}(S,T), \quad N \in \N.
\]
\end{definition}

The various fields involved here satisfy an important stationarity property: for any down-right path the fields $B$, $r^{\lambda}$, $g^{\lambda}$ and $\check{B}^{\lambda}$ are all independent within certain regions. The precise statement is given below, see Figure \ref{fig: burkeProperty} for an illustrative statement. 


\begin{figure}[h!]

\begin{tikzpicture}[>=latex]
%


\draw[line width=1pt](5,0)--(6.8,0);
\draw(6,0)node[above]{\scriptsize$B_0$};

\draw[line width=1pt, color=my-blue](-5.9,0)--(5,0);
\draw(2.5,0)node[color=my-blue,above]{\scriptsize${\check B}^\lambda_0$};

\draw[line width=1pt, color=my-red](5,0)--(5,0.5);
\draw(5,0.25)node[color=my-red,left]{\scriptsize$r^\lambda_1$};

\draw[line width=1pt, color=my-green](3.5,0.5)--(5,0.5);
\draw(4.2,0.5)node[color=my-green,above]{\scriptsize$g^\lambda_1$};

\draw[line width=1pt, color=my-blue](-5.9,0.5)--(3.5,0.5);
\draw(1.5,0.5)node[color=my-blue,above]{\scriptsize${\check B}^\lambda_1$};

\draw[line width=1pt](5,0.5)--(6.8,0.5);
\draw(6,0.5)node[above]{\scriptsize$B_1$};

\draw[line width=1pt, color=my-red](3.5,0.5)--(3.5,1);
\draw(3.5,0.75)node[color=my-red,left]{\scriptsize$r^\lambda_2$};

\draw[line width=1pt, color=my-green](2,1)--(3.5,1);
\draw(2.7,1)node[color=my-green,above]{\scriptsize$g^\lambda_2$};

\draw[line width=1pt, color=my-blue](-5.9,1)--(2,1);
\draw(0.2,1)node[color=my-blue,above]{\scriptsize${\check B}^\lambda_2$};

\draw[line width=1pt](3.5,1)--(6.8,1);
\draw(4,1)node[above]{\scriptsize$B_2$};

\draw[line width=1pt, color=my-red](2,1)--(2,1.5);
\draw(2,1.25)node[color=my-red,left]{\scriptsize$r^\lambda_3$};

\draw[line width=1pt, color=my-green](1.3,1.5)--(2,1.5);
\draw[line width=1pt](2,1.5)--(6.8,1.5);
\draw(2.5,1.5)node[above]{\scriptsize$B_3$};

\draw(-0.1,1.9)node[color=my-green]{$\ddots$};
\draw(5,2.1)node{$\vdots$};
\draw(-2.5,1.6)node[color=my-blue]{$\vdots$};

\draw[line width=1pt, color=my-red](-1.5,2)--(-1.5,2.5);
\draw(-1.5,2.25)node[color=my-red,left]{\scriptsize$r^\lambda_{n-2}$};

\draw[line width=1pt, color=my-blue](-5.9,2)--(-1.5,2);
\draw(-3.5,2)node[color=my-blue,above]{\scriptsize${\check B}^\lambda_{n-3}$};
\draw[line width=1pt, color=my-green](-1.5,2)--(-.8,2);

\draw[line width=1pt, color=my-green](-3,2.5)--(-1.5,2.5);
\draw(-2.3,2.5)node[color=my-green,above]{\scriptsize$g^\lambda_{n-2}$};

\draw[line width=1pt, color=my-blue](-5.9,2.5)--(-3,2.5);
\draw(-4.5,2.5)node[color=my-blue,above]{\scriptsize${\check B}^\lambda_{n-2}$};

\draw[line width=1pt](-1.5,2.5)--(6.8,2.5);
\draw(-.5,2.5)node[above]{\scriptsize$B_{n-2}$};

\draw[line width=1pt, color=my-red](-3,2.5)--(-3,3);
\draw(-3,2.75)node[color=my-red,left]{\scriptsize$r^\lambda_{n-1}$};

\draw[line width=1pt, color=my-green](-4.3,3)--(-2.8,3);
\draw(-3.7,3)node[color=my-green,above]{\scriptsize$g^\lambda_{n-1}$};

\draw[line width=1pt, color=my-blue](-5.9,3)--(-4.3,3);
\draw(-5.3,3)node[color=my-blue,above]{\scriptsize${\check B}^\lambda_{n-1}$};

\draw[line width=1pt](-3,3)--(6.8,3);
\draw(-2.3,3)node[above]{\scriptsize$B_{n-1}$};

\draw[line width=1pt, color=my-red](-4.3,3)--(-4.3,3.5);
\draw(-4.3,3.25)node[color=my-red,left]{\scriptsize$r^\lambda_n$};

\draw[line width=1pt, color=my-green](-5.9,3.5)--(-4.3,3.5);
\draw(-5,3.5)node[color=my-green,above]{\scriptsize$g^\lambda_n$};

\draw[line width=1pt](-4.3,3.5)--(6.8,3.5);
\draw(-3.5,3.5)node[above]{\scriptsize$B_n$};

\draw[fill=black](5,0)circle(0.5mm)node[below]{\scriptsize$t_1$};
\draw[fill=black](3.5,0)circle(0.5mm)node[below]{\scriptsize$t_2$};
\draw[fill=black](2,0)circle(0.5mm)node[below]{\scriptsize$t_3$};

\draw(0.2,0)node[below]{$\cdots$};

\draw[fill=black](-1.5,0)circle(0.5mm)node[below]{\scriptsize$t_{n-2}$};
\draw[fill=black](-3,0)circle(0.5mm)node[below]{\scriptsize$t_{n-1}$};
\draw[fill=black](-4.3,0)circle(0.5mm)node[below]{\scriptsize$t_n$};

\end{tikzpicture}

\caption{Independence structure of the $\check{B}, r, g$, and $B$ fields.}
\label{fig: burkeProperty}
\end{figure}
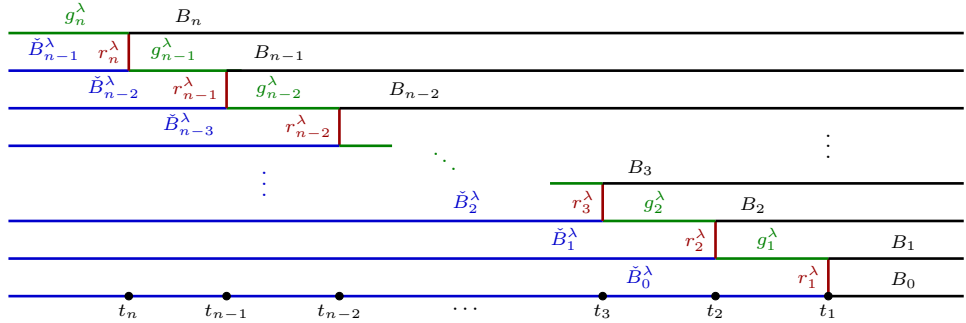

\begin{theorem}[Burke Property of the O'Connell-Yor Stationary Polymer, \cite{OCY:SPA2001,SV:ALEA2010}]\label{thm:burke}
For $n \in\N$ consider times $-\infty < t_n \leq t_{n-1} \leq \ldots \leq t_1 < \infty$. Then the processes
\begin{align*}
&\{ g_n^{\lambda}(s, t_n) : s \leq t_n \}, \ \{ B_n(t_n, s) : s \geq t_n \}, \  \{ \check{B}_0^{\lambda}(s, t_1) : s \leq t_1 \}, \ \{ B_0(t_1, s) : s \geq t_1 \},\\
&\{ g_j^{\lambda}(t_{j+1}, s) : t_{j+1} \leq s \leq t_j \}, \ \{ \check{B}_j^{\lambda}(s,t_{j+1}) : s \leq t_{j+1} \}, \ \{ B_j(t_j,s) : s \geq t_j \},\\ 
&1 \leq j \leq n-1,
\end{align*}
are mutually independent Brownian motions, and they are independent of the random variables $r_i^{\lambda}(t_i), 1 \leq i \leq n$, which are also iid and have distribution $e^{-r_i^{\lambda}(t_i)} \sim \operatorname{Gamma}(\lambda, 1)$.
\end{theorem}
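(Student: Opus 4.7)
The plan is to induct on $n$. The base case $n=1$ is a one-level Burke property, established directly via It\^o calculus on the SDE derived from Definition~\ref{defn:stationary_Z}. The inductive step $n\to n+1$ adds the top level $n+1$ and the new time $t_{n+1}\le t_n$ by applying the same one-level Burke at level $n+1$, using the $n$-time theorem to guarantee the input has the required joint distribution.

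For the base case $n=1$: differentiating the integral in Definition~\ref{defn:stationary_Z} (via It\^o's product rule on the factorization $\tZ^\lambda_{(1,T)} = e^{B_1(T)-\lambda T}\,\psi(T)$ with $\psi$ adapted and of bounded variation in $T$) yields
\[
d\tZ^\lambda_{(1,T)} = \bigl[\lambda\tZ^\lambda_{(0,T)} + \bigl(\tfrac12-\lambda\bigr)\tZ^\lambda_{(1,T)}\bigr]\,dT + \tZ^\lambda_{(1,T)}\,dB_1(T),
\]
and hence, by It\^o's formula and the relations $g_1^\lambda = B_0 - r_1^\lambda$, $\check B_0^\lambda = B_1 - r_1^\lambda$, the autonomous SDE
\[
dr_1^\lambda(T) = \lambda\bigl(e^{-r_1^\lambda(T)}-1\bigr)\,dT + dB_0(T) + dB_1(T).
\]
Solving the associated Fokker-Planck equation identifies the unique invariant density of $r_1^\lambda$ as that of $-\log Y$ with $Y$ of the claimed Gamma form; started in this stationary measure, the two-sided SDE makes $r_1^\lambda(T)$ marginally Gamma for every $T$. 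L\'evy's characterization, applied to the increments of $g_1^\lambda$ and $\check B_0^\lambda$ under the time-reversed stationary dynamics (so that the nonlinear drift is absorbed into a reversible compensator), identifies $g_1^\lambda(\cdot,t_1)$ and $\check B_0^\lambda(\cdot,t_1)$ as Brownian motions on $(-\infty,t_1]$. Independence among these BMs and $r_1^\lambda(t_1)$ at the fixed time $t_1$ is confirmed by computing the joint characteristic function; the right-halves $B_0(t_1,\cdot)$ and $B_1(t_1,\cdot)$ on $[t_1,\infty)$ are independent of everything on $(-\infty,t_1]$ by the simple Markov property of Brownian motion.

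For the inductive step from $n$ to $n+1$: apply the $n$-time theorem first. This yields, in particular, that $g_n^\lambda(\cdot,t_n)$ is a two-sided Brownian motion on $(-\infty,t_n]$ independent of $B_{n+1}$ (which has not entered the system yet). Then apply the base case at level $n+1$ and time $t_{n+1}\le t_n$, with the ``$B_0$-role'' played by $g_n^\lambda(\cdot,t_n) = -\log\tZ^\lambda_{(n,\cdot)}$ restricted to $(-\infty,t_n]$ and the ``$B_1$-role'' played by $B_{n+1}$. This produces the new Gamma variable $r_{n+1}^\lambda(t_{n+1})$ together with the new Brownian motions $g_{n+1}^\lambda(\cdot,t_{n+1})$, $\check B_n^\lambda(\cdot,t_{n+1})$, and $B_{n+1}(t_{n+1},\cdot)$, all mutually independent and independent of every field from the $n$-time theorem except for $g_n^\lambda$ on $(-\infty,t_{n+1}]$, which has now been ``consumed'' by the split. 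The remaining piece $g_n^\lambda(t_{n+1},\cdot)$ on $[t_{n+1},t_n]$ survives untouched and, by the simple Markov property applied to the BM $g_n^\lambda$ at the internal time $t_{n+1}$, is independent of the pieces produced by the split. Combining these statements with the inductive hypothesis gives the full independence claim at the $(n+1)$-st stage.

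The main obstacle is the careful bookkeeping that lets the base-case argument be applied ``verbatim'' at an arbitrary level: one must check that $g_n^\lambda(\cdot,t_n)$ (extended suitably past $t_n$) really plays the role of the input $B_0$ in the one-level SDE derivation, and that the new fields produced at level $n+1$ are measurable functions only of $g_n^\lambda(\cdot,t_{n+1})$ and $B_{n+1}(\cdot,t_{n+1})$, both of which are independent of the rest of the $n$-time configuration. This amounts to the invariance-under-Burke-move statement for the stationary polymer, and it is exactly the content of the Matsumoto-Yor transformation: the map taking $(B_0,B_1)$ to $(\check B_0^\lambda,g_1^\lambda)$ is measure-preserving on two-sided Brownian motion modulo the Gamma-distributed boundary variable $r_1^\lambda(t_1)$. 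The detailed verification proceeds along the lines of \cite{OCY:SPA2001, SV:ALEA2010}.
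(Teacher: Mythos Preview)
The paper does not give its own proof of Theorem~\ref{thm:burke}; it states the result with attribution to \cite{OCY:SPA2001,SV:ALEA2010} and uses it as a black box. So there is nothing in the paper to compare your argument against.

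That said, your sketch follows the route of the cited references: a one-level Burke property obtained from the SDE for $r_1^\lambda$ and its reversible Gamma invariant measure, followed by induction on levels, feeding $g_n^\lambda$ into the role of the base Brownian motion at the next stage. Your SDE $dr_1^\lambda(T)=\lambda(e^{-r_1^\lambda(T)}-1)\,dT+dB_0(T)+dB_1(T)$ is correct, and the stationary density is indeed proportional to $e^{-\lambda r-e^{-r}}$, i.e.\ $e^{-r_1^\lambda}\sim\operatorname{Gamma}(\lambda,1)$. The step that deserves the most caution is the phrase ``L\'evy's characterization, applied \ldots\ under the time-reversed stationary dynamics'': this is exactly where the work lies in \cite{OCY:SPA2001}, and it is not just a one-line invocation of L\'evy. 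One must show that the drift of $\check B_0^\lambda$ and $g_1^\lambda$ vanishes in the \emph{reversed} filtration (equivalently, compute the generator of the time-reversal of the stationary diffusion for $r_1^\lambda$ and verify the martingale property there). Your sentence hides that computation. Similarly, in the inductive step the claim that the new level-$(n{+}1)$ outputs are measurable with respect to $(g_n^\lambda(\cdot,t_{n+1}),B_{n+1}(\cdot,t_{n+1}))$ alone is the crux of the bookkeeping and should be stated and checked explicitly rather than deferred to ``along the lines of''. None of this is a gap in strategy, but as written the proposal is a correct outline rather than a proof.
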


We take note of several important identities in these definitions that will be useful later on. First, from the definition of $r_N^{\lambda}$  we clearly have
\[
\sum_{k=1}^{N} r_k^{\lambda}(T) = \log \tZ^{\lambda}_{(N,T)} + B_0(T).
\]
In addition, using Definition \ref{defn:stationary_Z} and the definition of $g_N^{\lambda}$, we can construct \(r_N^\lambda \)  from the underlying Brownian motion \(B_{N} \) and the \(g_{N-1}^\lambda \) variables:
\[
r_N^\lambda (T) = \log \int_{-\infty}^T e^{B_{N}(s, T) + g_{N-1}^\lambda(s, T) - \lambda(T - s)} \, ds.
\]
From \cite[Lemma 3.2]{SV:ALEA2010} (which they attribute to \cite{OCY:SPA2001}), we also have the following involution-type identity for constructing $r_N^{\lambda}$ from $\check{B}^{\lambda}_{N-1}$ and $g_N^{\lambda}$.
\begin{align} \label{lem: involutionIdentity}
r_N^\lambda (T) = \log \int_T^\infty e^{\check{B}^\lambda_{N-1}(T, s) + g_{N}^\lambda(T, s) + \lambda(T-s)} \, ds.
\end{align}

\subsection{Ratios of Stationary Partition Functions}

For our proof, the crucial result needed is that ratios of appropriately defined partition functions using the weights $\check{\mB}^{\lambda}$ are independent of the height $N$. The new partition functions are of point-to-line type, meaning the last variable (typically $s_n$) is free in the integral that defines it, rather than fixed. They also pick up an extra weight on the terminal line, from a different collection of fields. Since we will use these types of partition functions repeatedly we introduce new notation for them.

\begin{definition}\label{defn:Z_bar}
Let $\mB = \{ B_i\}_{i \in \Z}$ and $\bar{\mB} = \{ \bar{B}_i \}_{i \in \Z}$ be independent fields of iid two-sided Brownian motions. For $\lambda > 0$, $(m,s) \in \Z \times \R$ and $n \geq m$ define the partition functions $\bar{Z}^{\lambda}_{(m,s),n}$ by
\[
\bar{Z}^{\lambda}_{(m,s),n}(\mB, \bar{\mB}) = \int_{\Pi_{(m,s),n}} \exp \left \{ \sum_{k=m}^n B_k(s_{k-1}, s_k) - \bar{B}_{n+1}(s_n) - \lambda s_n \right \} \, d \ms_{m,n}, 
\]
where we recall $\Pi_{(m,s),n} = \{ \ms_{m-1,n} : s = s_{m-1} < s_m < \ldots < s_n \}$.
\end{definition}

Note that the $\bar{Z}^{\lambda}$ partition function can be rewritten as
\begin{align}\label{eqn:Z_bar_alt}
\bar{Z}^{\lambda}_{(m,s),n}(\mB, \bar{\mB}) = \int_s^{\infty} e^{-\bar{B}_{n+1}(x) - \lambda x} Z_{(m,s),(n,x)}(\mB) \, dx.
\end{align}
We will use this identity repeatedly. The next lemma says that inputting the fields $\check{\mB}^{\lambda}$ and $g^{\lambda}$ into this partition function recovers the fields $B$ and $r^{\lambda}$.

\begin{lemma}\label{lem:stationary_ratios}
For $N \in\N$ the following identities hold:
\[
\frac{\bar{Z}_{(0,t), N-1}^{\lambda}(\check{\mB}^{\lambda}, -g^{\lambda})}{\bar{Z}_{(0,s), N-1}^{\lambda}(\check{\mB}^{\lambda}, -g^{\lambda})} = e^{B_0(s,t) - \lambda (t-s)} , \quad \frac{\bar{Z}_{(0,t), N}^{\lambda}(\check{\mB}^{\lambda}, -g^{\lambda})}{\bar{Z}_{(1,t), N}^{\lambda}(\check{\mB}^{\lambda}, -g^{\lambda})} =  e^{r_1^{\lambda}(t)}.
\]
In particular, the ratios are independent of the choice of $N$.
\end{lemma}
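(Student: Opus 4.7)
The clean way to establish both identities is to prove the stronger factorization
\[
\bar{Z}^{\lambda}_{(m,t), n}(\check{\mathbf{B}}^{\lambda}, -g^{\lambda}) = e^{g^{\lambda}_m(t) - \lambda t}
\]
for all integers $0 \leq m \leq n$. The right-hand side depends on $m$ and $t$ alone, so in particular it does not depend on $n$. Both formulas in the lemma then follow by inspection: the first ratio equals $e^{g^{\lambda}_0(t)-g^{\lambda}_0(s)-\lambda(t-s)} = e^{B_0(s,t)-\lambda(t-s)}$ since $g^{\lambda}_0 = B_0$, while the second equals $e^{g^{\lambda}_0(t) - g^{\lambda}_1(t)} = e^{r^{\lambda}_1(t)}$ since $r^{\lambda}_1 = g^{\lambda}_0 - g^{\lambda}_1$ by Definition \ref{defn:r_g_check}.

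The plan is to prove the strengthened identity by induction on $n - m \geq 0$. The base case $m = n$ reduces, by Definition \ref{defn:Z_bar}, to showing
\[
\int_t^\infty e^{\check{B}^{\lambda}_m(t, s_m) + g^{\lambda}_{m+1}(s_m) - \lambda s_m}\,ds_m \;=\; e^{g^{\lambda}_m(t) - \lambda t}.
\]
Rewriting $g^{\lambda}_{m+1}(s_m) = g^{\lambda}_{m+1}(t, s_m) + g^{\lambda}_{m+1}(t)$ pulls an $e^{g^{\lambda}_{m+1}(t) - \lambda t}$ factor out of the integral, and the remaining integral is precisely $e^{r^{\lambda}_{m+1}(t)}$ by the involution identity \eqref{lem: involutionIdentity} applied with $N = m+1$. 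The algebraic identity $r^{\lambda}_{m+1} + g^{\lambda}_{m+1} = g^{\lambda}_m$, immediate from Definition \ref{defn:r_g_check}, then closes the base case.

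For the inductive step, I would peel off the first jump via
\[
\bar{Z}^{\lambda}_{(m,t),n}(\check{\mathbf{B}}^{\lambda}, -g^{\lambda}) = \int_t^\infty e^{\check{B}^{\lambda}_m(t, s_m)}\,\bar{Z}^{\lambda}_{(m+1, s_m), n}(\check{\mathbf{B}}^{\lambda}, -g^{\lambda})\,ds_m,
\]
and substitute the inductive hypothesis $\bar{Z}^{\lambda}_{(m+1,s_m),n}(\check{\mathbf{B}}^{\lambda}, -g^{\lambda}) = e^{g^{\lambda}_{m+1}(s_m) - \lambda s_m}$. The resulting integral is of exactly the form treated in the base case, so the same evaluation yields $e^{g^{\lambda}_m(t) - \lambda t}$ and closes the induction. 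No step looks genuinely difficult: the only non-mechanical move is recognizing that the correct quantity to induct on is the strengthened factorized identity rather than either ratio individually; once that is in hand, the involution identity \eqref{lem: involutionIdentity} of Sepp\"al\"ainen--Valk\'o does all the heavy lifting, and the $N$-independence asserted in the lemma is automatic from the fact that the right-hand side depends only on the starting data $(m,t)$.
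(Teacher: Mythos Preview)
Your proposal is correct and follows essentially the same approach as the paper: both arguments establish the closed-form identity $\bar{Z}^{\lambda}_{(m,s),n}(\check{\mathbf{B}}^{\lambda},-g^{\lambda}) = e^{g^{\lambda}_m(s)-\lambda s}$ by using the involution identity \eqref{lem: involutionIdentity} together with the recursion $g^{\lambda}_{m}=g^{\lambda}_{m+1}+r^{\lambda}_{m+1}$, and then read off the two ratios. The only cosmetic difference is the direction of the reduction: the paper peels off the \emph{last} layer to show directly that $\bar{Z}^{\lambda}_{(m,s),n}=\bar{Z}^{\lambda}_{(m,s),n-1}$ and then evaluates at $n=m$, whereas you peel off the \emph{first} jump and induct on $n-m$; the computations are otherwise identical.
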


\begin{proof} 
For shorthand write simply $\bar{Z}^{\lambda}_{(m,s), n}(\check{\mB}^{\lambda}, -g^{\lambda}) = \bar{Z}^{\lambda}_{(m,s),n}$ and $Z_{(m,s),(n,x)}(\check{\mB}^{\lambda}) = Z_{(m,s),(n,x)}$. By using \eqref{eqn:Z_bar_alt} and making repeated use of \eqref{eqn:g_recursion} and \eqref{lem: involutionIdentity} we have
\begin{align*}
&\bar{Z}_{(m,s), n}^{\lambda} = \int_s^{\infty} e^{g_{n+1}^{\lambda}(x) - \lambda x} Z_{(m,s), (n, x)} \, dx \\
&= \int_s^{\infty} e^{g_{n+1}^{\lambda}(x) - \lambda x} \int_s^x e^{\check{B}_{n}^{\lambda}(s_{n-1},x)} Z_{(m,s), (n-1,s_{n-1})} \, ds_{n-1} \, dx \\
&= \int_s^{\infty}\!\!\! e^{g_{n+1}^{\lambda}(s_{n-1}) - \lambda s_{n-1}} Z_{(m,s), (n-1, s_{n-1})} \int_{s_{n-1}}^{\infty} \!\!\!\!e^{g_{n+1}^{\lambda}(s_{n-1},x) + \check{B}_n^{\lambda}(s_{n-1}, x) + \lambda(s_{n-1} - x)} \, dx \, ds_{n-1} \\
&= \int_s^{\infty} e^{g_{n+1}^{\lambda}(s_{n-1}) - \lambda s_{n-1}} Z_{(m,s), (n-1, s_{n-1})} e^{r_{n+1}^{\lambda}(s_{n-1})} \, d s_{n-1} \\
&= \int_s^{\infty} e^{g_n^{\lambda}(s_{n-1}) - \lambda s_{n-1}} Z_{(m,s), (n-1, s_{n-1})} \, d s_{n-1} = \bar{Z}_{(m,s),n-1}^{\lambda}.
\end{align*}
Thus we see the independence from $n$ immediately. Iterating gives
\begin{align*}
\bar{Z}_{(m,s),n}^{\lambda} &= \bar{Z}_{(m,s),m}^{\lambda} = \int_s^{\infty} e^{\check{B}_m^{\lambda}(s, s_m) + g_{m+1}^{\lambda}(s_m) - \lambda s_m} \, ds_m \\
&= e^{g_{m+1}^{\lambda}(s) - \lambda s + r_{m+1}^{\lambda}(s)} = e^{g_m^{\lambda}(s) - \lambda s},
\end{align*}
the last equality again following by \eqref{eqn:g_recursion}. The first part of the claim is a simple consequence of this last identity and $g_0^{\lambda}(s) = B_0^{\lambda}(s)$, while the second follows from the above and \eqref{eqn:g_recursion}:
\[
\frac{\bar{Z}_{(0,t),N}(\check{\mB}^{\lambda}, -g^{\lambda})}{\bar{Z}_{(1,t),N}(\check{\mB}^{\lambda}, -g^{\lambda})} = e^{g_0^{\lambda}(t) - g_1^{\lambda}(t)} = e^{r_1^{\lambda}(t)}. \qedhere
\]
\end{proof}

\section{Busemann Functions and the Existence of Infinite Length Limits \label{sec:Busemann}}

Given Brownian weights $\mB$ and $\lambda > 0$, construct $\check{\mB}^\lambda$ weights as in Section \ref{sec: stationaryModel}. The next theorem describes limits of ratios of partition functions in the dual check Brownian motions, as opposed to the original Brownian motions. We recall that $\Psi_1$ is the trigamma function, see the notation description at the end of Section \ref{sec:intro}.

\begin{theorem}\label{thm:almost_sure_check_comparison}
Fix real numbers $ t > 0, \lambda > 0$ and $\gamma > \Psi_1(\lambda) > \delta > 0$. Then with probability one
\begin{align}\label{eqn:almost_sure_check_comp_1}
\limsup_{N \to \infty} \frac{Z_{(0,0),(N, N\delta)}(\check{\mB}^{\lambda})}{Z_{(1, 0),(N, N \delta)}(\check{\mB}^{\lambda})} \leq e^{r_1^\lambda(0)} \leq \liminf_{N \to \infty} \frac{Z_{(0, 0), (N, N \gamma)}(\check{\mB}^{\lambda})}{Z_{(1, 0),(N, N \gamma)}(\check{\mB}^{\lambda})}.
\end{align}
Similarly, with probability one
\begin{align}\label{eqn:almost_sure_check_comp_2}
\limsup_{N \to \infty} \frac{Z_{(0,t),(N, N\delta)}(\check{\mB}^{\lambda})}{Z_{(0, 0),(N, N \delta)}(\check{\mB}^{\lambda})} \leq e^{B_0(t) - \lambda t} \leq \liminf_{N \to \infty} \frac{Z_{(0, t), (N, N \gamma)}(\check{\mB}^{\lambda}), }{Z_{(0,0),(N, N \gamma)}(\check{\mB}^{\lambda})}.
\end{align}
\end{theorem}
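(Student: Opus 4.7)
The plan is to represent the $N$-independent quantity $e^{r_1^\lambda(0)}$ as a $\nu_N$-weighted average over the terminal time $x$ of the ratios
\[
R_N(x) := \frac{Z_{(0,0),(N,x)}(\check{\mB}^\lambda)}{Z_{(1,0),(N,x)}(\check{\mB}^\lambda)},
\]
and then to squeeze $R_N(N\delta)$ from above and $R_N(N\gamma)$ from below by combining monotonicity of $x \mapsto R_N(x)$ with concentration of $\nu_N$. Substituting \eqref{eqn:Z_bar_alt} into the second identity of Lemma \ref{lem:stationary_ratios} and dividing out $Z_{(1,0),(N,x)}$ inside the numerator yields
\[
e^{r_1^\lambda(0)} = \int_0^\infty R_N(x)\, d\nu_N(x), \qquad d\nu_N(x) \propto Z_{(1,0),(N,x)}(\check{\mB}^\lambda)\, e^{g_{N+1}^\lambda(x) - \lambda x}\, \mathbf{1}_{\{x>0\}}\, dx,
\]
almost surely, with $\nu_N$ a random probability measure on $(0,\infty)$. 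The inequalities \eqref{eqn:almost_sure_check_comp_2} will be handled symmetrically via the first identity of Lemma \ref{lem:stationary_ratios}, so I focus only on \eqref{eqn:almost_sure_check_comp_1}.

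Two ingredients drive the squeeze. First, I would deduce from the forthcoming comparison lemma (Lemma \ref{lm:comparison}), which gives deterministic bounds on ratios of partition functions in finite boxes, the monotonicity $R_N(x_1) \le R_N(x_2)$ whenever $x_1 \le x_2$, almost surely. Second, I would show exponential concentration of $\nu_N$ near $x = N \Psi_1(\lambda)$. The local shape theorem in the appendix gives $N^{-1} \log Z_{(1,0),(N,Nv)}(\check{\mB}^\lambda) \to \Lambda(v)$ a.s., while the Burke property (Theorem \ref{thm:burke}) combined with $g_0^\lambda = B_0$ and $\mathbb{E}[r_k^\lambda(0)] = -\Psi_0(\lambda)$ gives $N^{-1} g_{N+1}^\lambda(Nv) \to \Psi_0(\lambda)$ a.s., so the density of $\nu_N$ at $x = Nv$ is $\exp\{N F(v) + o(N)\}$ with $F(v) := \Lambda(v) - \lambda v + \Psi_0(\lambda)$. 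The Legendre duality between $\Lambda$ and $\Psi_0$ (see \cite{MO:free_energy}) forces $F$ to be strictly concave with unique maximum $F(\Psi_1(\lambda)) = 0$, so that both $\nu_N([0, N\delta])$ and $\nu_N([N\gamma, \infty))$ decay exponentially in $N$ for any $\delta < \Psi_1(\lambda) < \gamma$.

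For the lower bound $e^{r_1^\lambda(0)} \le \liminf_N R_N(N\gamma)$, monotonicity gives $\int_0^{N\gamma} R_N\, d\nu_N \le R_N(N\gamma)$, so $R_N(N\gamma) \ge e^{r_1^\lambda(0)} - E_N^{(\gamma)}$ where $E_N^{(\gamma)} := \int_{N\gamma}^\infty R_N\, d\nu_N$. Rewriting this tail as
\[
E_N^{(\gamma)} = \frac{1}{\bar{Z}^\lambda_{(1,0),1}(\check{\mB}^\lambda, -g^\lambda)} \int_{N\gamma}^\infty Z_{(0,0),(N,x)}(\check{\mB}^\lambda)\, e^{g_{N+1}^\lambda(x) - \lambda x}\, dx,
\]
where I have used Lemma \ref{lem:stationary_ratios} to collapse the normalizer to its $n=1$ value, exhibits $E_N^{(\gamma)}$ as a deterministic multiple of an exponentially decaying integral in $N$ (since $\gamma > \Psi_1(\lambda)$ places the integrand in the strictly negative region of $F$). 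Hence $E_N^{(\gamma)} \to 0$ a.s., giving $\liminf_N R_N(N\gamma) \ge e^{r_1^\lambda(0)}$. The upper bound $\limsup_N R_N(N\delta) \le e^{r_1^\lambda(0)}$ follows by discarding the $[0, N\delta]$ piece of $\int R_N\, d\nu_N$ and using monotonicity on $[N\delta, \infty)$ together with $\nu_N([N\delta, \infty)) \to 1$. The main obstacle is upgrading the pointwise LLN for the integrand to a uniform-in-$x$ large deviation upper bound strong enough to force $E_N^{(\gamma)} \to 0$ almost surely; producing this uniform bound is exactly the purpose of the local shape theorem in the appendix.
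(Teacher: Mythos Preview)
Your approach is correct and is essentially a repackaging of the paper's proof in measure-theoretic language. Once you unwind the definitions, your key inequalities are exactly the comparison lemma: writing $d\nu_N(x)\propto Z_{(1,0),(N,x)}(\check{\mB}^\lambda)e^{g_{N+1}^\lambda(x)-\lambda x}dx$, your bound $\int_0^{N\gamma} R_N\,d\nu_N \le R_N(N\gamma)\,\nu_N([0,N\gamma])$ is precisely the left inequality of Lemma~\ref{lm:comparison} at $t=N\gamma$, and your bound $R_N(N\delta)\,\nu_N([N\delta,\infty)) \le \int_{N\delta}^\infty R_N\,d\nu_N$ is its right inequality at $t=N\delta$. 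Likewise your concentration statement $\nu_N([N\delta,\infty))\to 1$ (and $E_N^{(\gamma)}\to 0$) is exactly Lemma~\ref{lem:shapeTheoremApplication} applied with the fields $\check{\mB}^\lambda$ and $-g^\lambda$, which is how the paper proceeds. So the three ingredients---Lemma~\ref{lem:stationary_ratios}, Lemma~\ref{lm:comparison}, Lemma~\ref{lem:shapeTheoremApplication}---appear in both arguments in the same roles.

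One small point of presentation: you phrase the comparison step as ``monotonicity of $x\mapsto R_N(x)$ deduced from Lemma~\ref{lm:comparison}.'' Pointwise monotonicity of $R_N$ is indeed true, but it is not an immediate corollary of Lemma~\ref{lm:comparison} as stated; more to the point, you do not actually need it. Your argument only uses that the $\nu_N$-average of $R_N$ over $[N\delta,\infty)$ dominates $R_N(N\delta)$ and that the $\nu_N$-average over $[0,N\gamma]$ is dominated by $R_N(N\gamma)$, and these are literally the two inequalities of Lemma~\ref{lm:comparison}. Invoking full monotonicity is a harmless detour, but you may as well cite the comparison lemma directly and skip it.
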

Note that the inequalities in Theorem \ref{thm:almost_sure_check_comparison} are both almost sure and come from the explicit construction of the $\check{\mB}^{\lambda}$ weights. Our main theorems are for the partition functions coming from the \textit{original} Brownian motions $\mB$, for which we have no such almost sure inequalities. However, we can turn the inequalities above into distributional inequalities for the partition functions derived from weights $\mB$, and then these distributional inequalities allow us to prove Theorem \ref{thm:ratios_of_part_funcs} in the case $t_n = n \theta$. We next give this proof of Theorem \ref{thm:ratios_of_part_funcs}, assuming Theorem \ref{thm:almost_sure_check_comparison}.

\begin{proof}[Proof of Theorem \ref{thm:ratios_of_part_funcs}, assuming Theorem \ref{thm:almost_sure_check_comparison}]
First, recall that $\check{\mB}^{\lambda} \equiv \mB$ for each $\lambda > 0$. Therefore, letting $\lesssim$ denote stochastic domination and inserting $\mB$ in place of $\check{\mB}^{\lambda}$ into the ratios in \eqref{eqn:almost_sure_check_comp_1}, this implies that for any $\lambda, \gamma, \delta > 0$ satisfying the conditions of Theorem \ref{thm:almost_sure_check_comparison} 
\begin{align}\label{eqn:limit_comparision}
\limsup_{N \to \infty} \frac{Z_{(0,0),(N, N \delta)}(\mB)}{Z_{(1, 0),(N, N \delta)}(\mB)}  \lesssim e^{r_1^{\lambda}(0)} \lesssim \liminf_{N \to \infty} \frac{Z_{(0,0),(N, N \gamma)}(\mB)}{Z_{(1, 0),(N, N \gamma )}(\mB)}.
\end{align}
Now fix $\theta > 0$. The left hand inequality of \eqref{eqn:limit_comparision} gives
\[
\limsup_{N \to \infty} \frac{Z_{(0,0),(N, N \theta)}(\mB)}{Z_{(1, 0),(N, N \theta)}(\mB)} \lesssim e^{r_1^{\lambda}(0)}
\]
for all $\lambda$ such that $\theta < \Psi_1(\lambda)$. Since the trigamma function is monotonically decreasing, this is equivalent to $\lambda \in (0, \Psi_1^{-1}(\theta))$. Now recall Theorem \ref{thm:burke} says $e^{-r_1^{\lambda}(0)} \sim \operatorname{Gamma}(\lambda,1)$, and since the $\operatorname{Gamma}(\lambda,1)$ family is stochastically increasing in $\lambda$, it follows that $e^{r_1^{\lambda}(0)}$ is stochastically decreasing in $\lambda$. Thus by letting $\lambda_{\theta} = \Psi_1^{-1}(\theta)$ and taking a limit as $\lambda \nearrow \lambda_{\theta}$, it follows that
\[
\limsup_{N \to \infty} \frac{Z_{(0,0),(N, N \theta)}(\mB)}{Z_{(1, 0),(N, N \theta)}(\mB)} \lesssim e^{r_1^{\lambda_{\theta}}(0)}.
\]
Similarly, the right hand side of \eqref{eqn:limit_comparision} gives
\[
e^{r_1^{\lambda}(0)} \lesssim \liminf_{N \to \infty} \frac{Z_{(0,0),(N, N \theta)}(\mB)}{Z_{(1, 0),(N, N \theta)}(\mB)}
\]
for all $\lambda \in (\Psi^{-1}(\theta), \infty) = (\lambda_{\theta}, \infty)$. Taking $\lambda \searrow \lambda_{\theta}$ and using the stochastic domination above gives
\[
\limsup_{N \to \infty} \frac{Z_{(0,0),(N, N \theta)}(\mB)}{Z_{(1, 0),(N, N \theta)}(\mB)} \lesssim e^{r_1^{\lambda_{\theta}}(0)} \lesssim \liminf_{N \to \infty} \frac{Z_{(0,0),(N, N \theta)}(\mB)}{Z_{(1, 0),(N, N \theta)}(\mB)}.
\]
But the liminf can stochastically dominate the limsup if and only if the two are in fact equal, and therefore the limit exists. Note that this argument also identifies the distribution of the limit of the ratios. The argument for the horizontal ratios of Theorem \ref{thm:ratios_of_part_funcs} is identical.
\end{proof}

\begin{remark}
The same argument identifies the limits for ratios at finally many space-time points. Combined with the cocycle property 
this identifies the finite-dimensional marginals of the process $(\mx, \my) \mapsto \mathcal{B}^{\theta}(\mx, \my)$.
\end{remark}

To prove Theorem \ref{thm:almost_sure_check_comparison} we require an intermediate comparison lemma. The next section sets about proving this lemma. They will also be used to extend the proof of Theorem \ref{thm:ratios_of_part_funcs} to the case of arbitrary sequences $t_n$ with a prescribed asymptotic slope.

\subsection{Comparison Lemma}

The comparison lemma considers the partition functions $\bar{Z}^{\lambda}$ restricted to certain events. For $\lambda > 0$ and $(m,s) \leq (n,t)$ we define
\[
\bar{Z}_{(m,s),n}^{\lambda}(\mB, \bar{\mB}; \tau_n < t) = \int_{\Pi_{(m,s),n}} \!\!\!\!\!\! e^{ -\lambda s_n - \bar{B}_{n+1}(s_n) + \sum_{k=m}^{n} B_k(s_{k-1}, s_k) } \1{s_n < t} \, d \ms_{m,n} 
\]
and an analogous partition function with the inequality switched in the indicator function, i.e.
\[
\bar{Z}_{(m,s),n}^{\lambda}(\mB, \bar{\mB}; \tau_n > t) = \int_{\Pi_{(m,s),n}} \!\!\!\!\!\! e^{ -\lambda s_n - \bar{B}_{n+1}(s_n) + \sum_{k=m}^{n} B_k(s_{k-1}, s_k) } \1{ s_n > t } \, d \ms_{m,n}.
\]
Note that these partition functions use the Brownian motions on both of the terminal lines at heights $m$ and $n+1$, respectively. They are not point-to-point polymers as the variable $s_{n}$ is free. However we do have the following comparison lemma with the point-to-point versions of Definition \ref{defn:p2l}.

\begin{lemma}\label{lm:comparison}
Fix $\lambda > 0$. Let $t > 0$ and $n \in\N$. Then
\begin{align}\label{eqn:comparison1}
\frac{\bar{Z}^{\lambda}_{(0,0),n}(\mB, \bar{\mB}; \tau_n < t)}{\bar{Z}^{\lambda}_{(1,0),n}(\mB, \bar{\mB}; \tau_n < t)} \leq \frac{Z_{(0,0),(n,t)}(\mB)}{Z_{(1,0),(n,t)}(\mB)} \leq \frac{\bar{Z}^{\lambda}_{(0,0),n}(\mB, \bar{\mB}; \tau_n > t)}{\bar{Z}^{\lambda}_{(1,0),n}(\mB, \bar{\mB}; \tau_n > t)}.
\end{align}
Similarly, let $0 < s < t < T$ and $n \in\Z_+$. Then 
\begin{align}\label{eqn:comparison2}
\frac{\bar{Z}^{\lambda}_{(0,t),n}(\mB, \bar{\mB}; \tau_n < T)}{\bar{Z}^{\lambda}_{(0,s),n}(\mB, \bar{\mB}; \tau_n < T)} \leq \frac{Z_{(0,t),(n,T)}(\mB)}{Z_{(0,s),(n,T)}(\mB)} \leq \frac{\bar{Z}^{\lambda}_{(0,t),n}(\mB, \bar{\mB}; \tau_n > T)}{\bar{Z}^{\lambda}_{(0,s),n}(\mB, \bar{\mB}; \tau_n > T)}.
\end{align}
\end{lemma}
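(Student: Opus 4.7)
The plan is to reduce both chains of inequalities to a single monotonicity statement about ratios of point-to-point partition functions, and then to prove that monotonicity by induction on the height $n$. The starting point is the integral representation, which follows directly from \eqref{eqn:Z_bar_alt} by splitting at $s_n = t$:
\[
\bar{Z}^{\lambda}_{(m,s),n}(\mB, \bar{\mB}; \tau_n < t) = \int_s^t e^{-\lambda x - \bar{B}_{n+1}(x)} Z_{(m,s),(n,x)}(\mB) \, dx,
\]
with the analogous formula integrating over $(t,\infty)$ for the event $\tau_n > t$. Each ratio appearing in \eqref{eqn:comparison1} and \eqref{eqn:comparison2} therefore becomes a weighted average of a ratio of point-to-point partition functions with the common positive weight $w(x) = e^{-\lambda x - \bar{B}_{n+1}(x)}$. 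The calculus fact I need is that if $f, g > 0$ and $f/g$ is non-decreasing on an interval, and $w > 0$, then both
\[
x \mapsto \frac{\int_a^x w(u) f(u) \, du}{\int_a^x w(u) g(u) \, du} \qquad \text{and} \qquad x \mapsto \frac{\int_x^b w(u) f(u) \, du}{\int_x^b w(u) g(u) \, du}
\]
are non-decreasing; equivalently, the weighted average of $f/g$ over $[a,x]$ is bounded above by $f(x)/g(x)$, and the weighted average over $[x,b]$ is bounded below by $f(x)/g(x)$. Applied with $f(x) = Z_{(0,0),(n,x)}(\mB)$ and $g(x) = Z_{(1,0),(n,x)}(\mB)$ (and its horizontal analogue), this converts \eqref{eqn:comparison1} and \eqref{eqn:comparison2} into monotonicity-in-$x$ statements about these point-to-point ratios.

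The core task is therefore to establish two monotonicity claims: (i) for each $n \geq 1$, the map $x \mapsto Z_{(0,0),(n,x)}(\mB)/Z_{(1,0),(n,x)}(\mB)$ is non-decreasing on $(0,\infty)$; and (ii) for each $n \geq 0$ and $0 < s < t$, the map $x \mapsto Z_{(0,t),(n,x)}(\mB)/Z_{(0,s),(n,x)}(\mB)$ is non-decreasing on $(t,\infty)$. Both will be proved by induction on $n$, using the last-jump recursion
\[
Z_{(m,s),(n,x)}(\mB) = e^{B_n(x)} \int_s^x e^{-B_n(u)} Z_{(m,s),(n-1,u)}(\mB) \, du
\]
obtained by conditioning on $\tau_{n-1} = u$. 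If $\rho_{n-1}(u)$ denotes the level-$(n-1)$ ratio in either case, the level-$n$ ratio rewrites as a weighted average of $\rho_{n-1}(u)$ against the positive weight $e^{-B_n(u)} Z_{(1,0),(n-1,u)}(\mB)$ in case (i) (respectively $e^{-B_n(u)} Z_{(0,s),(n-1,u)}(\mB)$ in case (ii)), and the calculus lemma above converts the inductive hypothesis directly into monotonicity at the next level.

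For the base case of (i), $n = 1$, a direct computation gives $Z_{(0,0),(1,x)}(\mB)/Z_{(1,0),(1,x)}(\mB)$ as a positive constant multiplied by $\int_0^x e^{B_0(u) - B_1(u)} \, du$, which is strictly increasing in $x$. For (ii) the base case is $n = 0$, where $Z_{(0,t),(0,x)}(\mB)/Z_{(0,s),(0,x)}(\mB) = e^{B_0(s) - B_0(t)}$ is constant in $x$, hence trivially non-decreasing. The only mild bookkeeping issue is that in (ii) the numerator and denominator integrals have different lower limits, $t$ and $s$; this I handle by extending the level-$(n-1)$ numerator integrand by zero on $[s, t)$, so that the extended $\rho_{n-1}$ equals $0$ on $[s, t)$ and the inductive ratio on $[t, \infty)$, remaining non-decreasing on $[s,\infty)$. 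I do not expect a serious obstacle: the argument is a clean induction driven by one elementary calculus lemma, and the main care is in the indexing of the recursion and in verifying that the lemma may be applied with the zero-extended $\rho_{n-1}$ in the horizontal case.
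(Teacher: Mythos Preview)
Your argument is correct. The calculus lemma (if $f/g$ is nondecreasing and $w>0$ then $x\mapsto\int_a^x wf/\int_a^x wg$ is nondecreasing, and the running average is bounded by the endpoint value) does exactly what you say, and the last-jump recursion $Z_{(m,s),(n,x)}=e^{B_n(x)}\int_s^x e^{-B_n(u)}Z_{(m,s),(n-1,u)}\,du$ puts the level-$n$ ratio in precisely the form to which that lemma applies. The zero-extension trick for the horizontal ratio is fine: the lemma only needs $g>0$, not $f>0$, so setting the numerator integrand to $0$ on $[s,t)$ preserves the monotonicity of $\tilde f/g$ and the argument goes through unchanged. One small remark: state the calculus lemma for $f\ge 0$, $g>0$ rather than $f,g>0$, so that the zero-extension step is literally covered.

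Your route is genuinely different from the paper's. The paper runs a simultaneous induction on \eqref{eqn:comparison1} and \eqref{eqn:comparison2}, decomposing according to the \emph{first} jump time $s_0$ and using \eqref{eqn:comparison2} at level $n+1$ to push through \eqref{eqn:comparison1} at level $n+2$; the two displays are intertwined in the induction. You instead prove a strictly stronger fact, namely that the middle ratios $t\mapsto Z_{(0,0),(n,t)}/Z_{(1,0),(n,t)}$ and $T\mapsto Z_{(0,t),(n,T)}/Z_{(0,s),(n,T)}$ are themselves nondecreasing, by an independent induction for each using the \emph{last}-jump recursion; the lemma then drops out as a weighted-average bound. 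Your approach is cleaner, decouples the two inequalities, and makes transparent why the boundary field $\bar B_{n+1}$ and the tilt $\lambda$ are irrelevant (they sit inside the weight $w$). The paper's approach, by contrast, stays closer to the $\bar Z^\lambda$ objects throughout and never isolates the underlying monotonicity of the point-to-point ratio.
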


\begin{remark}
As we will see from the proof, Lemma \ref{lm:comparison} is a deterministic statement that does not rely upon $\mB$ or $\bar{\mB}$ being a field of iid Brownian motions. In fact the result still holds if $\mB$ and $\bar{\mB}$ are replaced by any field of continuous functions. We will use this fact in the application of the lemma. Also note that the middle terms only use the Brownian motions $B_0, \ldots, B_{n}$ (by Definition \ref{defn:p2l} of the point-to-point partition functions) while the outside terms use the Brownian motions $B_0, \ldots, B_n$ \text{and} $B_{n+1}$. Somewhat miraculously the contribution from the extra Brownian motion $B_{n+1}$ cancels off in the ratios, which is what allows the comparison to hold. Our proof of Lemma \ref{lm:comparison} is a modification of the proof of \cite[Lemma 3.8]{SV:ALEA2010}. 
\end{remark}

\begin{proof}
Throughout we write simply $\bar{Z}^{\lambda}_{(m,s),n}$ and $Z_{(m,s),(n,t)}$ and suppress the dependence on $\mB$ and $\bar{\mB}$. The proof is by induction. We start with \eqref{eqn:comparison2} and $n = 0$. In that case the middle term is
\[
\frac{Z_{(0,t),(0,T)}}{Z_{(0,s),(0,T)}} = e^{-B_0(s,t)}.
\]
Now the left inequality of \eqref{eqn:comparison2} follows from
\begin{align*}
\bar{Z}_{(0,t),0}^{\lambda}(\tau_0 < T) &= \int_t^T \exp \{-\lambda s_0 - B_1(s_0) + B_0(t,s_0) \} \, ds_0 \\
&\leq e^{-B_0(s,t)} \int_s^T \exp \{-\lambda s_0 - B_1(s_0) + B_0(s,s_0) \} \, ds_0 \\
&= e^{-B_0(s,t)} \bar{Z}_{(0,s),0}^{\lambda}(\tau_0 < T).
\end{align*}
For the right hand side use
\begin{align*}
\bar{Z}_{(0,t),0}^{\lambda}(\tau_0 > T) = \int_T^{\infty} \exp \{-\lambda s_0 - B_1(s_0) + B_0(t,s_0) \} \, ds_0 = e^{-B_0(s,t)} \bar{Z}_{(0,s),0}^{\lambda}(\tau_0 > T)
\end{align*}
so that in the $n = 0$ case the right hand side is actually an equality. Now for \eqref{eqn:comparison1} and $n = 1$ the middle term is
\begin{align*}
\frac{Z_{(0,0),(1,t)}}{Z_{(1,0),(1,t)}} 
&= e^{-B_1(t)} \int_0^t \exp \{ B_0(0, s_0) + B_1(s_0, t) \} \, ds_0\\ 
&= \int_0^t \exp \{ B_0(0,s_0) - B_1(0,s_0) \} \, ds_0.
\end{align*}
By definition we have
\[
\bar{Z}_{(1,0),1}^{\lambda}(\tau_1 < t) = \int_0^t e^{-\lambda s_1 - B_2(s_1) + B_1(0,s_1)} \, ds_1
\]
and then combining the last two equations gives
\begin{align*}
&\bar{Z}_{(1,0),1}^{\lambda}(\tau_1 < t) \frac{Z_{(0,0), (1,t)}}{Z_{(1,0),(1,t)}} 
= \int_0^t \int_0^t e^{-\lambda s_1 - B_2(s_1) + B_1(s_0, s_1) + B_0(0,s_0)} \, ds_0 \, ds_1 \\
&\qquad \geq \int \int e^{-\lambda s_1 - B_2(s_1) + B_1(s_0, s_1) + B_0(0, s_0) } \1{0 < s_0 < s_1 < t} \, ds_0 \, ds_1 \\
&\qquad= \bar{Z}_{(0,0),1}^{\lambda}(\tau_1 < t).
\end{align*}
This proves the left hand side of \eqref{eqn:comparison1} in the $n=1$ case. For the right hand side we have
\begin{align*}
\bar{Z}_{(1,0),1}^{\lambda}(\tau_1 > t) = \int_t^{\infty} e^{-\lambda s_1 - B_2(s_1) + B_1(0,s_1)} \, ds_1,   
\end{align*}
from which it follows that
\begin{align*}
\bar{Z}_{(1,0),1}^{\lambda}(\tau_1 > t) \frac{Z_{(0,0),(1,t)}}{Z_{(1,0),(1,t)}} &= \int_t^{\infty} \int_0^t e^{-\lambda s_1 - B_2(s_1) + B_1(s_0, s_1) + B_0(0,s_0)} \, ds_0 \, ds_1 \\ 
&\leq \int_t^{\infty} \int_0^{s_1} e^{-\lambda s_1 - B_2(s_1) + B_1(s_0, s_1) + B_0(0,s_0)} \, ds_0 \, ds_1 \\
&= \bar{Z}_{(0,0),1}^{\lambda}(\tau_1 > t).
\end{align*}
This proves the right hand side of \eqref{eqn:comparison1}, in the case $n=1$.

Now we complete the induction. Assume that \eqref{eqn:comparison1} holds for some $n+1$ and \eqref{eqn:comparison2} holds for $n$. Then we show \eqref{eqn:comparison2} holds for $n+1$ by using the decomposition
\[
\bar{Z}_{(0,s),n+1}^{\lambda}(\tau_{n+1} < T) = \bar{Z}_{(0,t),n+1}^{\lambda}(\tau_{n+1} < T) e^{B_0(s,t)} + \int_s^t \bar{Z}_{(1,u), n+1}^{\lambda}(\tau_{n+1} < T) e^{B_0(s,u)} \, du,
\]
which follows by decomposing the integral according to whether $t \leq s_0 < T$ or $s < s_0 < t$. Thus
\begin{align*}
&\frac{\bar{Z}_{(0,s),n+1}^{\lambda}(\tau_{n+1} < T)}{\bar{Z}_{(0,t),n+1}^{\lambda}(\tau_{n+1} < T)} \\
&\qquad= e^{B_0(s,t)} + \int_s^t \frac{\bar{Z}_{(1,u), n+1}^{\lambda}(\tau_{n+1} < T)}{\bar{Z}_{(1,t), n+1}^{\lambda}( \tau_{n+1} < T)} \frac{\bar{Z}_{(1,t), n+1}^{\lambda}(\tau_{n+1} < T)}{\bar{Z}_{(0,t), n+1}^{\lambda}(\tau_{n+1} < T)} e^{B_0(s,u)} \, du \\
&\qquad \geq e^{B_0(s,t)} + \int_s^t \frac{Z_{(1,u),(n+1,T)}}{Z_{(1,t),(n+1,T)}} \frac{Z_{(1,t),(n+1,T)}}{Z_{(0,t),(n+1,T)}} e^{B_0(s,u)} \, du \\
&\qquad= \frac{Z_{(0,s),(n+1,T)}}{Z_{(0,t),(n+1,T)}},
\end{align*}
with the last equality following from the analogous decomposition for $Z_{(0,s),(n+1,T)}$. This gives the left hand side of \eqref{eqn:comparison2} for $n+1$. Now we will show the left hand side of \eqref{eqn:comparison1} for $n+2$. Let $0 < s < t$, and then by decomposing the partition function $\bar{Z}_{(0,0),n+2}(\tau_{n+2} < t)$ according to whether $0 \leq s_0 \leq s$ or $s < s_0 \leq t$ we get
\begin{align*}
&\frac{\bar{Z}^{\lambda}_{(0,0),n+2}(\tau_{n+2} < t)}{\bar{Z}^{\lambda}_{(1,0),n+2}(\tau_{n+2} < t)} = \frac{\bar{Z}^{\lambda}_{(0,s),n+2}(\tau_{n+2} < t)}{\bar{Z}^{\lambda}_{(1,s),n+2}(\tau_{n+2} < t)} \frac{\bar{Z}^{\lambda}_{(1,s),n+2}(\tau_{n+2} < t)}{\bar{Z}^{\lambda}_{(1,0),n+2}(\tau_{n+2} < t)} e^{B_0(0,s)} \\ 
&\qquad\qquad\qquad\qquad\qquad\qquad + \int_s^t \frac{\bar{Z}^{\lambda}_{(1,u),n+2}(\tau_{n+2} < t)}{\bar{Z}^{\lambda}_{(1,0),n+2}(\tau_{n+2} < t)} e^{B_0(0,u)} \, du \\
&\qquad \leq \frac{\bar{Z}^{\lambda}_{(0,s),n+2}(\tau_{n+2} < t)}{\bar{Z}^{\lambda}_{(1,s),n+2}(\tau_{n+2} < t)} \frac{Z_{(1,s),(n+2,t)}}{Z_{(1,0),(n+2,t)}} e^{B_0(0,s)} + \int_s^t \frac{Z_{(1,u), (n+2,t)}}{Z_{(1,0),(n+2,t)}} e^{B_0(0,u)} \, du. 
\end{align*}
Note that in the last inequality we used the left hand side of \eqref{eqn:comparison2} for $n+1$, which we just proved. For shorthand let 
\[
a_{1,n+2}(s,t) = \frac{Z_{(1,s),(n+2,t)}}{Z_{(1,0),(n+2,t)}} e^{B_0(0,s)}. 
\]
Then by applying the same decomposition to $Z_{(0,0), (n+2,t)}$ we get
\[
\frac{Z_{(0,0),(n+2,t)}}{Z_{(1,0),(n+2,t)}} = \frac{Z_{(0,s),(n+2,t)}}{Z_{(1,s),(n+2,t)}} a_{1,n+2}(s,t) + \int_s^t a_{1,n+2}(u,t) \, du.
\]
Taking differences gives
\begin{align*}
&\frac{\bar{Z}^{\lambda}_{(0,0),n+2}(\tau_{n+2} < t)}{\bar{Z}^{\lambda}_{(1,0),n+2}(\tau_{n+2} < t)} - \frac{Z_{(0,0),(n+2,t)}}{Z_{(1,0),(n+2,t)}}\\
 &\qquad\qquad= \left( \frac{\bar{Z}^{\lambda}_{(0,s),n+2}(\tau_{n+2} < t)}{\bar{Z}^{\lambda}_{(1,s),n+2}(\tau_{n+2} < t)} - \frac{Z_{(0,s),(n+2,t)}}{Z_{(1,s),(n+2,t)}} \right) a_{1,n+2}(s,t).
\end{align*}
This holds for all $0 < s < t$ and the term on the left is independent of $s$. The right hand side goes to zero as $s \nearrow t$, which proves the left hand side of \eqref{eqn:comparison1} for the $n+2$ case. 

It now remains to prove the right hand inequalities of \eqref{eqn:comparison1} and \eqref{eqn:comparison2}, for $n+2$ and $n+1$, respectively. The arguments are straightforward modifications of those for the left hand inequalities, which we leave for the reader.
\end{proof}

\subsection{Proof of Theorem \ref{thm:almost_sure_check_comparison}}

First we prove a lemma that shows that the dominant contribution to the partition function $\bar{Z}_{(m,s),n}^{\lambda}$ comes from paths with asymptotic slope $\Psi_1(\lambda)$ (i.e. those for which $\tau_n \sim n \Psi_1(\lambda)$). This follows from Proposition \ref{prop:SLLN_refinement}, which is a refinement of the free energy result of \cite{MO:free_energy} for the O'Connell-Yor polymer.

\begin{lemma} \label{lem:shapeTheoremApplication}
Fix $\lambda>0$, $s>0$, and $m \in \Z$. If $0<\theta<\Psi_1(\lambda)$, then with probability one
\[
\lim_{n\to\infty} \frac{\bar{Z}^\lambda_{(m,s),n}(\mB, \bar{\mB}; \tau_n > n\theta)}{\bar{Z}^\lambda_{(m,s),n}(\mB, \bar{\mB})}=1.
\]
If $\theta>\Psi_1(\lambda)$, then with probability one
\[
\lim_{n\to\infty} \frac{\bar{Z}^\lambda_{(m,s),n}(\mB, \bar{\mB}; \tau_{n}< n\theta)}{\bar{Z}^\lambda_{(m,s),n}(\mB, \bar{\mB})}=1.
\]
\end{lemma}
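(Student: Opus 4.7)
By symmetry, I focus on the case $\theta < \Psi_1(\lambda)$; the case $\theta > \Psi_1(\lambda)$ follows identically with inequalities reversed. Showing $\bar{Z}^\lambda_{(m,s),n}(\tau_n > n\theta)/\bar{Z}^\lambda_{(m,s),n} \to 1$ is equivalent to showing the complementary ratio $\bar{Z}^\lambda_{(m,s),n}(\tau_n \leq n\theta)/\bar{Z}^\lambda_{(m,s),n} \to 0$. The strategy is a Laplace-type analysis of the integral representation \eqref{eqn:Z_bar_alt},
\[
\bar{Z}^\lambda_{(m,s),n}(\mB, \bar{\mB}) = \int_s^\infty e^{-\bar{B}_{n+1}(x) - \lambda x} Z_{(m,s),(n,x)}(\mB) \, dx,
\]
in which, after substituting $x = n\gamma$, the exponent of the integrand grows like $n\bigl(p(\gamma) - \lambda \gamma\bigr)$, where $p$ is the deterministic limiting free-energy function provided by Proposition \ref{prop:SLLN_refinement}.

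\textbf{Step 1: uniform free-energy LLN and duality.} The first step is to invoke Proposition \ref{prop:SLLN_refinement} to obtain the almost sure limit $\frac{1}{n}\log Z_{(m,s),(n,n\gamma)}(\mB) \to p(\gamma)$ uniformly on compact subsets of $(0,\infty)$, combined with the Brownian strong law $\frac{1}{n}\bar{B}_{n+1}(n\gamma) \to 0$ uniformly on compacts. The function $p$ is concave, and by the Moriarty--O'Connell free-energy identity its Legendre transform satisfies
\[
\sup_{\gamma > 0}\bigl[p(\gamma) - \lambda \gamma\bigr] = -\Psi_0(\lambda),
\]
with unique maximizer $\gamma^\ast(\lambda) = \Psi_1(\lambda)$ (since $\Psi_0' = \Psi_1$). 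A standard Varadhan/Laplace argument applied to the integral then yields
\[
\frac{1}{n}\log \bar{Z}^\lambda_{(m,s),n}(\mB, \bar{\mB}) \longrightarrow -\Psi_0(\lambda) \quad \text{a.s.}
\]

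\textbf{Step 2: strict gap on the restricted integral and conclusion.} For $\theta < \gamma^\ast(\lambda)$, strict concavity of $\gamma \mapsto p(\gamma) - \lambda \gamma$ around its unique maximizer gives
\[
c_\theta := \sup_{0 < \gamma \leq \theta}\bigl[p(\gamma) - \lambda \gamma\bigr] \;<\; -\Psi_0(\lambda).
\]
Applying the same Laplace argument to the restricted integral over $x \leq n\theta$ produces
\[
\limsup_{n \to \infty} \frac{1}{n}\log \bar{Z}^\lambda_{(m,s),n}(\mB, \bar{\mB}; \tau_n \leq n\theta) \leq c_\theta \quad \text{a.s.,}
\]
so the restricted partition function is exponentially smaller than the full one. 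Dividing, $\bar{Z}^\lambda_{(m,s),n}(\tau_n \leq n\theta)/\bar{Z}^\lambda_{(m,s),n} \to 0$ exponentially fast, which rearranges to the first stated limit.

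\textbf{Main obstacle.} The delicate point is upgrading the Laplace-type asymptotics to an almost sure statement uniform in $\gamma$. This requires (i) the uniform-on-compacts convergence of $\frac{1}{n}\log Z_{(m,s),(n,n\gamma)}$ afforded by the local shape theorem of Proposition \ref{prop:SLLN_refinement}, (ii) a one-sided a priori upper bound on the same quantity, valid for all $\gamma \geq K$, so that the tail $\gamma \to \infty$ is controlled by the $-\lambda \gamma$ term and contributes negligibly to the integral, and (iii) a corresponding control of the small-$\gamma$ end using the exact first-moment formula $\E[Z_{(m,s),(n,x)}] = e^{(x-s)/2}(x-s)^{n-m}/(n-m)!$ together with a Markov bound to show that $x$ near $s$ contributes negligibly. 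Once these three uniformity/tail ingredients are in place, the remainder of the argument is routine Laplace-method bookkeeping.
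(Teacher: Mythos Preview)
Your proposal is correct and follows essentially the same route as the paper: reduce to showing the complementary ratio tends to zero, then exploit the strict gap between $\sup_{0<\gamma\le\theta}\{p(\gamma)-\lambda\gamma\}$ and $\sup_{\gamma>0}\{p(\gamma)-\lambda\gamma\}$ coming from strict concavity of $p$ and the unique maximizer at $\gamma^\ast=\Psi_1(\lambda)$.

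The one place you make extra work for yourself is in how you use Proposition~\ref{prop:SLLN_refinement}. You read it as a uniform-on-compacts LLN for the point-to-point free energy $n^{-1}\log Z_{(m,s),(n,n\gamma)}$, and then propose to run a Laplace/Varadhan argument yourself (with the attendant tail and uniformity obstacles you flag). But Proposition~\ref{prop:SLLN_refinement} already delivers the integrated statement directly: for any $s\le S<T\le\infty$,
\[
\lim_{n\to\infty} n^{-1}\log \bar{Z}^{\lambda}_{(m,s),n}(nS\le\tau_n\le nT)=\sup_{S\le t\le T}\{p(t)-\lambda t\}\quad\text{a.s.}
\]
Applying this once with $(S,T)=(0,\infty)$ and once with $(S,T)=(0,\theta)$ immediately gives the two exponential growth rates, and the strict-concavity gap finishes the proof in two lines, exactly as the paper does. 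All of the tail control and uniformity issues you list as the ``main obstacle'' are absorbed into the proof of Proposition~\ref{prop:SLLN_refinement} itself, so there is no need to reproduce them here.
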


\begin{proof}  For shorthand we write $\bar{Z}^{\lambda}_n = \bar{Z}^{\lambda}_{(m,s),n}$. Proposition \ref{prop:SLLN_refinement} gives the almost sure statements
\[
\lim_{n\to\infty}n^{-1} \log \bar{Z}^\lambda_{n} =
		\sup_{t>0}\{p(t)-\lambda t\}
 \]
  and
	\[ \lim_{n\to\infty} n^{-1}\log \bar{Z}^\lambda_{n} (\tau_n \le n \theta) = \sup_{0 < t \le \theta} \{p(t)-\lambda t\}.
\]
By Lemma \ref{lem:pconcave}, $p$ is a concave function. Thus $\sup_{t > 0} \{p(t) - \lambda t \}$  is a achieved at the unique point $t = \Psi_1(\lambda)$. This means the function $p(t)-\lambda t$ is strictly increasing for $t\in(0,\Psi_1(\lambda))$ and strictly decreasing for $t \in (\Psi_1(\lambda),\infty)$. Consequently, if $\theta < \Psi_1(\lambda)$, then $\sup_{t \le \theta} \{p(t) - \lambda t \}$ is strictly smaller than $p(\Psi_1(\lambda)) - \lambda \Psi_1(\lambda)$. This implies that for $\theta < \Psi_1(\lambda)$ there is a constant $c > 0$ such that, with probability one,
\[
\frac{\bar{Z}_n^{\lambda}(\tau_n \leq n \theta)}{\bar{Z}_n^{\lambda}} \leq e^{-c n + o(n)}
\]
for all but finitely many $n$. As a result
\[
\lim_{n \to \infty} \frac{\bar{Z}^\lambda_{n} (\tau_n \le n \theta)}{\bar{Z}^\lambda_{n}} = 0, 
\]
which proves the first statement of the lemma. The proof of the second statement is analogous, using the fact that
\[
\lim_{n\to\infty} n^{-1} \log \bar{Z}^\lambda_{n} (\tau_n \ge n \theta) = \sup_{t \ge \theta}\{p(t)-\lambda t\},
\]
which is strictly smaller than $p(\Psi_1(\lambda)) - \lambda \Psi_1(\lambda) = \sup_{t > 0} \{p(t) - \lambda t \}$ if $\theta > \Psi_1(\lambda)$.
\end{proof}

\begin{proof}[Proof of Theorem \ref{thm:almost_sure_check_comparison}] Fix $\lambda > 0$ and $0 < \delta < \Psi_1(\lambda)$. The comparison result of Lemma \ref{lm:comparison} applies to any pair of fields of Brownian motions, in particular to the fields $\check{\mB}^{\lambda}$ and $-g^{\lambda}$. For example, applying it to \eqref{eqn:comparison1} with $t = n \delta$ gives
\[
\frac{Z_{(0,0),(n,n\delta)}(\check{\mB}^{\lambda})}{Z_{(1,0),(n,n \delta)}(\check{\mB}^{\lambda})} \leq \frac{\bar{Z}^{\lambda}_{(0,0),n}(\check{\mB}^{\lambda}, -g^{\lambda}; \tau_n > n \delta)}{\bar{Z}^{\lambda}_{(1,0),n}(\check{\mB}^{\lambda}, -g^{\lambda}; \tau_n > n \delta)}.
\]
Now consider the limsup of both sides, and use Lemma \ref{lem:shapeTheoremApplication} to replace the right hand side with the unconstrained partition functions:
\begin{align*}
\limsup_{n \to \infty} \frac{Z_{(0,0),(n,n\delta)}(\check{\mB}^{\lambda})}{Z_{(1,0),(n,n\delta)}(\check{\mB}^{\lambda})} 
&\leq \limsup_{n \to \infty} \frac{\bar{Z}^{\lambda}_{(0,0),n}(\check{\mB}^{\lambda}, -g^{\lambda}; \tau_n > n \delta)}{\bar{Z}^{\lambda}_{(1,0),n}(\check{\mB}^{\lambda}, -g^{\lambda}; \tau_n > n \delta)} \\
&= \limsup_{n \to \infty} \frac{\bar{Z}^{\lambda}_{(0,0),n}(\check{\mB}^{\lambda}, -g^{\lambda})}{\bar{Z}^{\lambda}_{(1,0),n}(\check{\mB}^{\lambda}, -g^{\lambda})}.
\end{align*}
By Lemma \ref{lem:stationary_ratios}, the ratio on the right-hand side is independent of $n$ and equals $e^{r_1^\lambda(0)}$. This proves the left hand inequality in \eqref{eqn:almost_sure_check_comp_1} of the statement. The remaining inequalities in \eqref{eqn:almost_sure_check_comp_1} and \eqref{eqn:almost_sure_check_comp_2} are handled similarly.  
\end{proof}

\subsection{Proof of Theorem \ref{thm:ratios_of_part_funcs}: Convergence along Arbitrary Subsequences \label{sec:subsq}}

After the statement of Theorem \ref{thm:almost_sure_check_comparison} we showed how to use it to prove Theorem \ref{thm:ratios_of_part_funcs} in the case $t_n = n \theta$. Finally we show how to use the comparison result of Lemma \ref{lm:comparison} to extend Theorem \ref{thm:almost_sure_check_comparison} to arbitrary $t_n$ with  asymptotic speed $\theta$.

Let $t_n$ be any sequence such that $\lim_{n \to \infty} \frac{t_n}{n} = \theta$. For any $\epsilon > 0$, let $n$ be large enough so that $n(\theta - \epsilon) < t_n < n(\theta + \epsilon)$. This gives the containment of events $\{\tau_n > n(\theta + \epsilon) \} \subset \{\tau_n > t_n \} \subset \{ \tau_n > n( \theta - \epsilon) \}$. The comparison result Lemma \ref{lm:comparison} holds for any endpoint $t_n$ and any $\lambda > 0$, so
\begin{align*}
\frac{Z_{(0, 0), (n, t_n)}}{Z_{(1, 0), (n, t_n)}} \le \frac{\overline{Z}^\lambda_{(0, 0), n}(\tau_n > t_n)}{\overline{Z}^\lambda_{(1, 0), n}(\tau_n > t_n)} \le \frac{\overline{Z}^\lambda_{(0, 0), n}(\tau_n > n(\theta - \epsilon))}{\overline{Z}^\lambda_{(1, 0), n}(\tau_n > n(\theta + \epsilon))}.
\end{align*}
Now the first equation of Lemma \ref{lem:shapeTheoremApplication} can be applied for any $\lambda$ such that $\Psi_1(\lambda) > \theta + \epsilon$. Since $\Psi_1$ is monotonically decreasing, this condition is equivalent to $\lambda < \Psi_1^{-1}(\theta + \epsilon)$. As argued after the statement of Theorem \ref{thm:almost_sure_check_comparison}, by first taking the limsup as $n \to \infty$ and then a limit as $\lambda \nearrow \Psi_1^{-1}(\theta + \epsilon) = \lambda_{\theta}(\epsilon)$ we get the result
\[
\limsup_{n \to \infty} \frac{Z_{(0, 0), (n, t_n)}}{Z_{(1, 0), (n, t_n)}} \lesssim e^{r_1^{\lambda_{\theta}(\epsilon)}(0)}.
\]
As this is true for all $\epsilon > 0$, and $\Psi_1^{-1}(\theta + \epsilon) \nearrow \Psi_1^{-1}(\theta) = \lambda_{\theta}$, we can take the limit $\epsilon \searrow 0$ to get 
\[
\limsup_{n \to \infty} \frac{Z_{(0, 0), (n, t_n)}}{Z_{(1, 0), (n, t_n)}} \lesssim e^{r_1^{\lambda_{\theta}}(0)}.
\]
The remaining inequalities work in the same way, which completes the proof of equation \eqref{eqn:Busemann_exist_statement} of Theorem \ref{thm:ratios_of_part_funcs}.

\subsection{Proof of Theorem \ref{thm:ratios_of_part_funcs}: Tightness of the Path Measures \label{sec:tightness}}

To finish the proof that the infinite-length path measure exists, it remains to show that the family of measure $Q^{\mB}_{(0, 0), (n, t_n)}$ is almost surely relatively compact. By \cite[Lemmas 8.1 and 8.2]{EK:markov}, this amounts to showing the following. Intuitively, this result says that, as $n \to \infty$, there is no chance that jumps accumulate.

\begin{lemma}
Fix $T > 0$ and $\theta>0$. Then with probability one, for any sequence $t_n$ such that $t_n/n\to\theta$ we have
\[
\lim_{\delta \downarrow 0} \sup_n \sup_{k < n} Q^{\mB}_{(0, 0), (n, t_n)}(\tau_{k + 1} - \tau_k < \delta, \tau_k < T) = 0. 
\]
\end{lemma}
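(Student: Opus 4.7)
The plan is to combine the Markov-chain representation of the quenched measure (Lemma~\ref{lem:quenched_marginals}) with the same sandwich-via-stationary argument that drove Theorems~\ref{thm:ratios_of_part_funcs} and~\ref{thm:almost_sure_check_comparison}, and then add a truncation at large $k$. From Lemma~\ref{lem:quenched_marginals},
\[
Q^{\mB}_{(0,0),(n,t_n)}(\tau_{k+1}>s+\delta\mid\tau_k=s) \;=\; \frac{e^{B_{k+1}(s,s+\delta)}\,Z_{(k+1,s+\delta),(n,t_n)}(\mB)}{Z_{(k+1,s),(n,t_n)}(\mB)},
\]
so the supremum in the statement reduces to controlling
\[
\sup_{s\in[0,T]}\Bigl(1 - \frac{e^{B_{k+1}(s,s+\delta)}\,Z_{(k+1,s+\delta),(n,t_n)}}{Z_{(k+1,s),(n,t_n)}}\Bigr)
\]
almost surely, uniformly in $n$ and $k<n$.

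For this ratio I would replicate the proof of Theorem~\ref{thm:ratios_of_part_funcs}: apply Lemma~\ref{lm:comparison} (with a vertical shift by $k+1$) to the fields $(\check{\mB}^\lambda,-g^\lambda)$ so as to sandwich the ratio between two $\bar{Z}^\lambda$ ratios, use Lemma~\ref{lem:stationary_ratios} to identify each of those stationary ratios explicitly, remove the $\tau_n\lessgtr t_n$ restrictions via Lemma~\ref{lem:shapeTheoremApplication} for $\lambda$ just above and just below $\lambda_\theta=\Psi_1^{-1}(\theta)$, and then let $\lambda\to\lambda_\theta$. Via the distributional identity $\check{\mB}^\lambda\equiv\mB$, this two-sided sandwich yields the stochastic bound
\[
e^{B_{k+1}(s,s+\delta)}\cdot \frac{Z_{(k+1,s+\delta),(n,t_n)}}{Z_{(k+1,s),(n,t_n)}} \;=\; 1 + O(\delta) \quad\text{with high probability, uniformly in } n,
\]
once combined with a standard Brownian modulus-of-continuity estimate for the finitely many Brownian motions $B_1,\ldots,B_{K+1}$ over $[0,T+1]$. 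This gives the required uniform-in-$n$ bound for every fixed $k\leq K$.

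To upgrade to uniformity in all $k<n$, I would truncate at a random $K=K(T,\theta,\mB)<\infty$. Lemma~\ref{lem:quenched_marginals} gives the representation
\[
Q^{\mB}(\tau_k<T)=\int_0^T \frac{Z_{(1,0),(k,s)}\,Z_{(k+1,s),(n,t_n)}}{Z_{(1,0),(n,t_n)}}\,ds,
\]
and applying Markov's inequality together with the first-moment formula $\E[Z_{(m,s),(n,t)}]=e^{(t-s)/2}(t-s)^{n-m}/(n-m)!$ from the Remark after Definition~\ref{defn:p2l}, together with a Borel--Cantelli argument along a dyadic sequence of $K$, produces an a.s.\ finite threshold with $\sup_n Q^{\mB}(\tau_K<T)<\epsilon$. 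Since $\{\tau_k<T\}$ is decreasing in $k$, the short-gap probability for $k>K$ is bounded by $Q^{\mB}(\tau_K<T)<\epsilon$, so only the finitely many $k\leq K$ require the detailed bound of the previous paragraph. The principal obstacle is this truncation step, which must furnish an a.s.\ uniform-in-$n$ tail estimate for $Q^{\mB}(\tau_K<T)$ without invoking the weak limit of the path measures (whose existence is the very statement being proved); the key input is that $Q^{\mB}(\tau_K<T)$ decays super-polynomially in $K$ once $K>T/\theta$, which follows from the Stirling expansion of the first-moment formula.
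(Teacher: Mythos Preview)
Your high-level strategy (work with $\check{\mB}^\lambda$, invoke the comparison lemma, identify the stationary ratios) is the right toolkit, and your formula
\[
Q^{\mB}_{(0,0),(n,t_n)}(\tau_{k+1}>s+\delta\mid\tau_k=s)=\frac{e^{B_{k+1}(s,s+\delta)}\,Z_{(k+1,s+\delta),(n,t_n)}}{Z_{(k+1,s),(n,t_n)}}
\]
is correct. But two steps in the plan do not go through as written.

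\textbf{The ``let $\lambda\to\lambda_\theta$'' step is misplaced.} Once you fix $\lambda$ and pass to the coupled field $\check{\mB}^\lambda$, you cannot slide $\lambda$ without changing the probability space; the $\lambda\to\lambda_\theta$ manoeuvre in the proof of Theorem~\ref{thm:ratios_of_part_funcs} was a \emph{distributional} argument, not an almost-sure one. Fortunately you do not need both sides of the sandwich here: the displayed ratio is itself a conditional probability and hence $\le 1$ automatically, so only the \emph{lower} bound matters. A single fixed $\lambda$ with $\Psi_1(\lambda)<\theta$ suffices, via the left inequality of \eqref{eqn:comparison2} together with Lemma~\ref{lem:stationary_ratios} and Lemma~\ref{lem:shapeTheoremApplication}. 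You should also note that removing the $\{\tau_n<t_n\}$ restriction uniformly over $s\in[0,T]$ needs a short extra argument (e.g.\ using $Z_{(m,s'),(n,x)}\le e^{-\check B_m^\lambda(0,s')}Z_{(m,0),(n,x)}$ to reduce to a single starting point).

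\textbf{The truncation step has a genuine gap.} You claim that Markov's inequality plus the first-moment formula $\E[Z_{(m,s),(n,t)}]=e^{(t-s)/2}(t-s)^{n-m}/(n-m)!$ yields $\sup_n Q^{\mB}(\tau_K<T)<\epsilon$ via Borel--Cantelli. It does not: the quenched probability has the random $Z_{(0,0),(n,t_n)}$ in the denominator, so $\E[Q^{\mB}(\tau_K<T)]$ is \emph{not} a ratio of first moments. What one actually has to bound is
\[
Q^{\mB}(\tau_K<T)\le\Bigl(\int_0^T Z_{(0,0),(K,s)}\,ds\Bigr)\cdot\sup_{s\in[0,T]}\frac{Z_{(K+1,s),(n,t_n)}}{Z_{(0,0),(n,t_n)}},
\]
and the second factor must be controlled \emph{uniformly in $n$}. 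That control does not come from moments; it requires the comparison lemma together with the stationary ratios (so that the ratio is bounded by $\exp\{-\sum_{i\le K+1} r_i^\lambda(0)\}$ up to a factor tending to $1$). This is exactly the estimate the paper isolates; once you have it, there is no need to split into small $k$ and large $k$ at all: the paper writes the \emph{joint} density of $(\tau_k,\tau_{k+1})$, extracts a factor $\delta$ immediately, and bounds the remaining product $e^{A_k}\cdot\int_0^T Z_{(0,0),(k,x)}\,dx\cdot Z_{(k+2,0),(n,t_n)}/Z_{(0,0),(n,t_n)}$ uniformly over all $k<n$ in one stroke, combining the moment Borel--Cantelli with the comparison/stationary bound on the last ratio. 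Your two-stage plan ends up needing that same ratio bound inside the truncation, so it is strictly more work for the same conclusion.
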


\begin{proof}
Fix $\lambda$ satisfying $0 < \psi_1(\lambda) < \theta$, which we will use later in order to apply Lemma \ref{lem:shapeTheoremApplication}. We will prove the result for the family of measures  $Q_{(0,0),(n,t_n)}^{\check{\mB}^{\lambda}}$ instead, which by the Burke property of the O'Connell-Yor polymer has the same law as the family $Q_{(0,0),(n,t_n)}^{\mB}$. We can use Lemma \ref{lem:quenched_marginals} to express the densities of $\tau_{k+1}$ and $\tau_k$ using ratios of partition functions. Letting $E_k(T)$ be the event $E_k(T)= \{ \tau_{k+1} - \tau_k < \delta, \tau_k < T \}$ we obtain
\begin{align}
Q^{\check{\mB}^{\lambda}}_{(0, 0), (n, t_n)}(E_k(T))
&= \int_0^T \int_{x}^{x + \delta} \frac{Z_{(0, 0), (k, x)} \cdot e^{\check{B}^{\lambda}_{k+1}(x, y)} \cdot Z_{(k + 2, y), (n, t_n)}}{Z_{(0, 0), (n, t_n)}} \, dy \, dx \notag \\
&\le \int_0^{T} \int_x^{x+\delta} Z_{(0, 0), (k, x)} e^{\check{B}^{\lambda}_{k+1}(x,y)} e^{-\check{B}^{\lambda}_{k+2}(0,y)} \frac{Z_{(k + 2, 0), (n, t_n)}}{Z_{(0,0),(n,t_n)}} \, dy  \, dx \notag \\
&\le \delta e^{A_k} \frac{Z_{(k+2,0),(n,t_n)}}{Z_{(0,0),(n,t_n)}} \int_0^T Z_{(0,0),(k,x)} \, dx, \label{eqn:tightness_bound}
\end{align}
where the $Z$ partition functions are all implicitly using the $\check{\mB}^{\lambda}$ weights, and $A_k$ is the stationary sequence of random variables
\[
A_k = \sup_{\substack{0 \leq x \leq y \leq T+\delta \\ |x-y| \leq \delta}} \check{B}^{\lambda}_{k+1}(x,y) -\inf_{0 \leq y \leq T+\delta} \check{B}^{\lambda}_{k+2}(0,y).
\]
Since $\E[e^{A_k}]$ is independent of $k$, and is finite by Gaussian tail bounds, Borel-Cantelli implies the event $\{ e^{A_k} \leq 2^k$ for all $k$ sufficiently large$\}$ has probability one. Similarly, since $\E[Z_{(0,0),(k,x)}] = e^{x/2} x^k/k!$ it follows that
\[
\E \left[ \int_0^T Z_{(0,0),(k,x)} \,dx \right] \leq T e^{T/2} \frac{T^k}{k!}, 
\]
and therefore by Borel-Cantelli the event $\{ \int_0^T Z_{(0,0), (k,x)} \, dx \leq (6 \lambda)^{-k}$ for all $k$ sufficiently large$\}$ has probability one. Applying this to \eqref{eqn:tightness_bound} gives that the event
\[
\Bigl\{\exists k_0\in\N:Q_{(0,0), (n,t_n)}^{\check{\mB}^{\lambda}}(E_k(T)) \leq \delta 2^k (6 \lambda)^{-k} \frac{Z_{(k+2,0), (n, t_n)}}{Z_{(0,0), (n,t_n)}} \textrm{ for all } n>k>k_0\Bigr\}
\]
has probability one. To complete the proof we will show that almost surely
\[
\frac{Z_{(k+2,0),(n,t_n)}}{Z_{(0,0),(n,t_n)}} < 2(2 \lambda)^k \textrm{ for all } k < n \textrm{ and } n \textrm{ sufficiently large},
\]
for our choice of $\lambda > 0$. Then the above two bounds imply that
\[
\sup_n \sup_{k < n} Q_{(0,0), (n,t_n)}^{\check{\mB}^{\lambda}}(E_k(T)) \leq C \delta
\]
for some $C = C(\omega) < \infty$ (almost surely), from which the result follows. 

For the last part use \eqref{eqn:comparison1} from the comparison lemma to obtain the bound
\begin{align*}
\frac{Z_{(k+2,0),(n,t_n)}}{Z_{(0,0),(n,t_n)}} 
&\leq \frac{\bar{Z}^{\lambda}_{(k+2,0),n}(\check{\mB}^{\lambda}, -g^{\lambda}; \tau_n < t_n)}{\bar{Z}^{\lambda}_{(0,0),n}(\check{\mB}^{\lambda}, -g^{\lambda}; \tau_n < t_n)} \\
&\leq \frac{\bar{Z}^{\lambda}_{(k+2,0),n}(\check{\mB}^{\lambda}, -g^{\lambda})}{\bar{Z}^{\lambda}_{(0,0),n}(\check{\mB}^{\lambda}, -g^{\lambda})}\cdot \frac{\bar{Z}^{\lambda}_{(0,0),n}(\check{\mB}^{\lambda}, -g^{\lambda})}{\bar{Z}^{\lambda}_{(0,0),n}(\check{\mB}^{\lambda}, -g^{\lambda}; \tau_n < t_n)}. 
\end{align*}
The above holds for all $\lambda > 0$ since it only relies on the comparison lemma. Now we use that $\lambda$ satisfies $\Psi_1(\lambda) < \theta$, so that by Lemma \ref{lem:shapeTheoremApplication} the second ratio on the right hand side converges to one almost surely, and therefore the event
\[
\Bigl\{\frac{Z_{(k+2,0), (n, t_n)}}{Z_{(0,0), (n,t_n)}} \leq 2 \frac{\bar{Z}^{\lambda}_{(k+2,0),n}(\check{\mB}^{\lambda}, -g^{\lambda})}{\bar{Z}^{\lambda}_{(0,0),n}(\check{\mB}^{\lambda}, -g^{\lambda})} \textrm{ for all } k < n \textrm{ and } n \textrm{ sufficiently large }\Bigr\}
\]
has probability one. Finally, use the Burke property of Lemma \ref{lem:stationary_ratios} to obtain
\[
 \frac{\bar{Z}^{\lambda}_{(k+2,0),n}(\check{\mB}^{\lambda}, -g^{\lambda})}{\bar{Z}^{\lambda}_{(0,0),n}(\check{\mB}^{\lambda}, -g^{\lambda})} =  \exp \left \{ - \sum_{i=1}^{k+2} r_i^{\lambda}(0) \right \}.
\]
 Note that the right hand side is independent of $n$. Since the $e^{-r_i^{\lambda}(0)}$ are iid $\operatorname{Gamma}(\lambda, 1)$ random variables, which have mean $\lambda$, it follows by Borel-Cantelli that the event 
\[
\Bigl\{\frac{\bar{Z}^{\lambda}_{(k+2,0),n}(\check{\mB}^{\lambda}, -g^{\lambda})}{\bar{Z}^{\lambda}_{(0,0),n}(\check{\mB}^{\lambda}, -g^{\lambda})} \leq (2\lambda)^k \textrm{ for all } k < n \textrm{ and } n \textrm{ sufficiently large}\Bigr\}
\]
has probability one. 
\end{proof}

\section{Limits of Brownian LPP \label{sec:LPP}}
In this section, we prove the limit result for the Brownian last passage percolation model. As before, let $\mB = (B_i(t), t \in \R)_{i \in \Z}$ be a field of independent two-sided Brownian motions. For space-time points $(m, s) \le (n, t)$ and a sequence of jump times $s = s_{m-1} < s_m < \dots < s_n = t$, we let $\sum_{k = m}^n B_k(s_{k -1}, s_k)$ be the ``length'' of the c\`adl\`ag path defined by the jump times in the random environment. Under the polymer measure sets with longer length have larger probability, while the last passage model picks the longest path. 

\begin{definition}\label{defn:LPP} 
Let $(m, s), (n, t) \in \Z \times \R$ with $(m, s) < (n, t)$. The \emph{last-passage time} is defined as 
\[
L_{(m, s), (n, t)}(\mB) = \sup \left\lbrace \sum_{k = m}^n B_k(s_{k-1}, s_k ) : s = s_{m-1} < s_m < \dots < s_n = t \right\rbrace.
\]
For $m=n$ and $s \leq t$ we set $L_{(m,s), (m,t)} = B_m(s,t)$.
\end{definition}

With this definition we get the following result for limits of differences of passage times:

\begin{theorem} \label{thm: LPPresult}
Fix $\mx, \my \in \Z \times \R$ and $\theta \in \R$. Then with probability one, the limit
\[
\lim_{n \to \infty} (L_{\mx, (n, t_n)} - L_{\my, (n, t_n)}) = \mathcal{B}^{\theta}_{\infty}(\mx, \my)
\]
exists almost surely and is independent of the choice of the sequence $\{ t_n \}$, so long as $\lim_{n \to \infty} \frac{t_n}{n} = \theta$.
\end{theorem}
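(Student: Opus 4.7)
The plan is to mirror the proof of Theorem \ref{thm:ratios_of_part_funcs} step by step, with point-to-point partition functions $Z$ replaced by last-passage times $L$, ratios replaced by differences, and integrals over intermediate jump times replaced by suprema. First, I would set up a stationary version of Brownian LPP analogous to Definition \ref{defn:stationary_Z}, using a drifted Brownian boundary indexed by a parameter $\lambda>0$. The analog of the Burke property (due to O'Connell--Yor, and used in the LPP literature) asserts that across any down-right path the boundary increments $r_N^\lambda$, the column increments $g_N^\lambda$, the dual weights $\check{\mB}^\lambda$, and the original Brownian increments are mutually independent, with the incoming $r_N^\lambda(t)$ distributed as iid exponentials with a parameter depending on $\lambda$. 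In particular, the analog of Lemma \ref{lem:stationary_ratios} gives that appropriately defined ``point-to-line'' maxima $\bar L^\lambda_{(m,s),n}(\check{\mB}^\lambda,-g^\lambda)$ have differences that are independent of $n$ and equal to the stationary increments $r_1^\lambda(0)$ (or $B_0(t)-\lambda t$, in the horizontal case).

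Next, I would prove a deterministic comparison lemma of exactly the form of Lemma \ref{lm:comparison}, but with $\bar Z^\lambda$ replaced by
\[
\bar L^\lambda_{(m,s),n}(\mB,\bar{\mB};\tau_n<t) = \sup\Bigl\{\textstyle -\lambda s_n - \bar B_{n+1}(s_n)+\sum_{k=m}^n B_k(s_{k-1},s_k) : \ms_{m-1,n}\in\Pi_{(m,s),n},\ s_n<t\Bigr\},
\]
and $\{\tau_n>t\}$ analogously, so that
\[
\bar L^\lambda_{(0,0),n}(\tau_n<t)-\bar L^\lambda_{(1,0),n}(\tau_n<t) \le L_{(0,0),(n,t)}-L_{(1,0),(n,t)} \le \bar L^\lambda_{(0,0),n}(\tau_n>t)-\bar L^\lambda_{(1,0),n}(\tau_n>t),
\]
and the analogous horizontal version. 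The same induction on $n$ as in Lemma \ref{lm:comparison} should go through, with the inductive splits (according to the value of $s_0$, $s_1$, or $s_{n+1}$) becoming suprema rather than integrals; the cancellation of the auxiliary field $\bar B_{n+1}$ in the differences is the zero-temperature shadow of the cancellation for $\bar Z^\lambda$.

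Then I would establish the LPP analog of Lemma \ref{lem:shapeTheoremApplication}: for a velocity $\theta$ on the ``wrong side'' of the characteristic slope $v(\lambda)$ associated to the stationary LPP with parameter $\lambda$, the event $\{\tau_n<n\theta\}$ (resp.\ $\{\tau_n>n\theta\}$) carries all of the asymptotic weight of $\bar L^\lambda_{(m,s),n}$, in the sense that $n^{-1}(\bar L^\lambda_{(m,s),n}(\tau_n<n\theta)-\bar L^\lambda_{(m,s),n})\to 0$ almost surely. This follows from the deterministic LPP shape function $p_{\mathrm{LPP}}$ (available as the zero-temperature limit of the polymer free energy, or directly from the literature) together with strict concavity and the uniqueness of the maximizer of $p_{\mathrm{LPP}}(t)-\lambda t$. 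With these ingredients, the squeeze argument goes through verbatim: for each fixed $\theta$ and each $\lambda<\lambda_\theta$, the comparison plus shape theorem plus Burke property bound $\limsup_{n}(L_{(0,0),(n,t_n)}-L_{(1,0),(n,t_n)})$ above stochastically by $r_1^\lambda(0)$; for each $\lambda>\lambda_\theta$ it bounds the $\liminf$ below stochastically by $r_1^\lambda(0)$. Monotonicity of $r_1^\lambda(0)$ in $\lambda$ and sending $\lambda\to\lambda_\theta$ force $\limsup=\liminf$ a.s., which yields the limit and identifies its distribution. Independence of the limit from the choice of $\{t_n\}$ with $t_n/n\to\theta$ is then handled exactly as in Section \ref{sec:subsq}, using the monotonicity of $\bar L^\lambda_{(m,s),n}(\tau_n>t_n)$ in $t_n$ to sandwich between two sequences of the form $n(\theta\pm\varepsilon)$. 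Finally, reduction to the two basic ``vertical'' and ``horizontal'' cases is identical to the remark following Theorem \ref{thm:ratios_of_part_funcs}.

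The step I expect to be the main obstacle is the comparison lemma: in the polymer case the inductive decomposition splits an integral into two pieces, and one then bounds ratios of sums of integrals by using the inductive hypothesis on each piece. In the LPP case, the corresponding decomposition replaces integrals by suprema and the linearity used in the polymer argument is lost, so some care is needed to verify that both the upper and lower inequalities of \eqref{eqn:comparison1}--\eqref{eqn:comparison2} survive the passage from integrals to maxima in the right direction. Once this is in place, the remaining machinery is a straightforward transcription of the polymer proof.
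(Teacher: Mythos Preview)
Your overall architecture is correct and matches the paper's: stationary boundary model with Burke property, a comparison lemma sandwiching point-to-point differences between restricted point-to-line differences, a shape-theorem lemma removing the $\{\tau_n \lessgtr n\theta\}$ restriction in the limit, and then the same stochastic-domination squeeze as in the polymer case. The reduction to vertical/horizontal increments and the handling of general $t_n$ are identical.

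The one substantive divergence is exactly the step you flagged as the obstacle. The paper does \emph{not} prove the LPP comparison lemma by transcribing the inductive polymer argument; instead it uses a short paths-crossing argument. For the upper bound of \eqref{eqn: lengthCL1}, the geodesic achieving $L_{(0,0),(n,t)}$ and the maximizer for $\bar L^\lambda_{(1,0),n}(\tau_n>t)$ must meet at some point $\my$ with $(1,0)\le \my\le (n,t)$; decomposing both quantities at $\my$ and using sub-optimality of the concatenated paths for $L_{(1,0),(n,t)}$ and $\bar L^\lambda_{(0,0),n}(\tau_n>t)$ gives the inequality directly. The lower bound and the horizontal version \eqref{eqn: lengthCL2} are handled by choosing the other pair of paths that are forced to cross. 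This argument is specific to last-passage percolation (there is a single optimizing path, so ``crossing'' makes sense) and is precisely why the zero-temperature comparison is \emph{easier} than the positive-temperature one, not harder. Your proposed inductive route may be salvageable, but the loss of linearity you worried about is real, and the paper sidesteps it entirely.
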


\begin{remark}
Clearly the last passage time can be realized as the zero temperature limit of the polymer free energy 
\[
\lim_{\beta \to \infty} \frac{1}{\beta} \log Z_{(m,s),(n,t)}(\beta \mB) = L_{(m,s),(n,t)}(\mB).
\]
Ideas from \cite{BL:thermo_limit, BL:zero_temp_limit, RJ:DLR_Busemann} can be used to show that weak convergence holds at the level of Busemann functions, namely that
\[
\frac{1}{\beta} \mathcal{B}^{\theta}(\mx, \my; \beta \mB) \mathop{\Longrightarrow}_{\beta \to \infty} \mathcal{B}^{\theta}_{\infty}(\mx, \my,\mB).
\]
\end{remark}

Since the proofs are analogous to the polymer case, we simply state the necessary lemmas and the main ideas, leaving the full details to the reader.

\subsection{The Stationary Model}

As in the polymer model, the strategy is to define new fields of Brownian motion from the original $\mB$ which satisfy a Burke property. We can define a version of a point-to-line passage time using the new weights, and due to the recursive construction of the weights, the differences of this passage time are independent of the weight on the terminal line.

\begin{definition}
Fix $\lambda > 0$. For $N \in \N$ and $T \in \R$, set
\begin{align}\label{eqn:LPP_field_defns}
\begin{split}
&q_N^\lambda(T) = \sup_{- \infty < s \le T} \left\lbrace B_N(s, T) + f^\lambda_{N - 1}(s, T) - \lambda (T - s) \right\rbrace \\
&f^\lambda_N(T) = f_{N-1}^\lambda(T) + q_N^\lambda(0) - q_N^\lambda(T). 
\end{split}
\end{align}
As initial conditions for the recursion, set $f_0^\lambda(T) = B_0(T)$. For $N \ge 1$, define a sequence of field $\tilde{B}^\lambda$ by
\[
\tilde{B}_{N-1}^\lambda(S, T) = B_N(S, T) - q_N^\lambda(S, T)
\]
\end{definition}

By \cite[Theorem 2]{OCY:SPA2001}, these fields satisfy a Burke property and are independent similarly to Theorem \ref{thm:burke}. Furthermore, the random variables $q_i^{\lambda}(0)$ have an exponential distribution with mean $1/\lambda$. We now define a point-to-line passage time which uses special weights on the boundary line:
\[
\overline{L}_{(m, s), n}^\lambda(\mB, \overline{\mB}) = \sup \Bigl\{ \sum_{k = m}^n B_k(s_{k-1}, s_k ) - \overline{B}_{n+1}(s_n) - \lambda s_n : s = s_{m-1} < s_m < \dots < s_n  \Bigr\}.
\]
When the weights $\tilde{\mB}^\lambda$ and $-f^\lambda$ are used, the differences in the lengths become independent of $n$, and only depend on differences in the starting points. 

\begin{lemma}
For $N \in \N$, the following identities hold:
\begin{align*}
&\overline{L}_{(0, t), N-1}^{\lambda}(\tilde{\mB}^\lambda, - f^\lambda) - \overline{L}_{(0,s), N-1}^{\lambda}(\tilde{\mB}^\lambda, - f^\lambda) = B_0(s, t) - \lambda(t - s), \,\,\,\,\,\, \text{and} \\
&\overline{L}_{(0, t), N}^{\lambda}(\tilde{\mB}^\lambda, - f^\lambda) - \overline{L}_{(1, t), N}^{\lambda}(\tilde{\mB}^\lambda, - f^\lambda) = q_1^\lambda(t).
\end{align*}
\end{lemma}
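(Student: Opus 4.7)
The plan is to mirror the proof of Lemma~\ref{lem:stationary_ratios} almost verbatim, replacing $\log \int$ by $\sup$ throughout. The key identity I will need is the zero-temperature (tropical) analogue of \eqref{lem: involutionIdentity}: namely that the recursive definition of $q_N^\lambda$ is equivalent to the dual representation
\begin{align*}
q_N^\lambda(T) = \sup_{s \ge T}\Bigl\{\,\tilde{B}^\lambda_{N-1}(T,s) + f_N^\lambda(T,s) - \lambda(s-T)\,\Bigr\}.
\end{align*}
This is the Brownian LPP counterpart of the involution identity and is available from \cite{OCY:SPA2001}; I would cite it or prove it by a short calculation paralleling the polymer case.

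With that identity in hand, first I would rewrite
\begin{align*}
\overline{L}^\lambda_{(m,s),n}(\tilde{\mB}^\lambda,-f^\lambda) = \sup_{x \ge s}\Bigl\{f_{n+1}^\lambda(x) - \lambda x + L_{(m,s),(n,x)}(\tilde{\mB}^\lambda)\Bigr\},
\end{align*}
and then use the standard LPP recursion
$L_{(m,s),(n,x)}(\tilde{\mB}^\lambda) = \sup_{s_{n-1} \in [s,x]}\{L_{(m,s),(n-1,s_{n-1})}(\tilde{\mB}^\lambda) + \tilde{B}^\lambda_n(s_{n-1},x)\}$
to interchange the two suprema. Applying the recursion $f^\lambda_{n+1}(s_{n-1},x) = f^\lambda_n(s_{n-1},x) - q^\lambda_{n+1}(s_{n-1},x)$ to the exponent and then the dual identity above, the inner $\sup_{x \ge s_{n-1}}$ collapses to $f^\lambda_{n+1}(s_{n-1}) - \lambda s_{n-1} + q^\lambda_{n+1}(s_{n-1}) = f^\lambda_n(s_{n-1}) - \lambda s_{n-1} + q^\lambda_{n+1}(0)$. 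The additive constant $q^\lambda_{n+1}(0)$ is independent of $s_{n-1}$ and cancels in the differences we ultimately care about; more importantly, the remaining expression is exactly $\overline{L}^\lambda_{(m,s),n-1}(\tilde{\mB}^\lambda,-f^\lambda)$ up to this constant, giving the iteration step
\begin{align*}
\overline{L}^\lambda_{(m,s),n}(\tilde{\mB}^\lambda,-f^\lambda) = \overline{L}^\lambda_{(m,s),n-1}(\tilde{\mB}^\lambda,-f^\lambda) + q^\lambda_{n+1}(0).
\end{align*}

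Iterating down to the base level $n=m$, where the definition directly gives $\overline{L}^\lambda_{(m,s),m}(\tilde{\mB}^\lambda,-f^\lambda) = f_m^\lambda(s) - \lambda s + \text{(constants)}$ by another application of the dual identity, I would conclude that for any $N$,
\begin{align*}
\overline{L}^\lambda_{(m,s),n}(\tilde{\mB}^\lambda,-f^\lambda) = f_m^\lambda(s) - \lambda s + C_{m,n},
\end{align*}
where $C_{m,n}$ is deterministic in $s$ (depending only on the levels and on the $q^\lambda_i(0)$'s). Both claimed identities then follow immediately by subtraction: for the first, take $m=0$ with endpoints $(0,t)$ and $(0,s)$, so the constants cancel and $f_0^\lambda(t) - f_0^\lambda(s) = B_0(s,t)$ gives the right-hand side $B_0(s,t) - \lambda(t-s)$; for the second, take starting points $(0,t)$ and $(1,t)$, where the iteration constants again cancel and the difference $f_0^\lambda(t) - f_1^\lambda(t) = q_1^\lambda(t) - q_1^\lambda(0)$ combines with the different base-level constants to leave exactly $q_1^\lambda(t)$.

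The main obstacle is verifying the tropical involution identity; once that is in hand, the bookkeeping of additive constants in the LPP recursion is the only real difference from the polymer calculation, since in that case the multiplicative prefactors cleanly canceled at every step. In particular, care is needed to track the $q^\lambda_{n+1}(0)$ terms so that they disappear in the relevant differences, but they do not affect the structure of the argument. Since the two identities are the $\beta \to \infty$ shadows of Lemma~\ref{lem:stationary_ratios}, this calculation is effectively a zero-temperature degeneration of that proof.
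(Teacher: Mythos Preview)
Your proposal is correct and follows essentially the same approach as the paper: the paper also cites the tropical involution identity $q_n^\lambda(T) = \sup_{s \ge T}\{\tilde{B}^\lambda_{n-1}(T,s) + f_n^\lambda(T,s) + \lambda(T-s)\}$ from \cite{OCY:SPA2001}, uses it together with the LPP recursion to obtain the same one-step reduction $\overline{L}^\lambda_{(m,s),n} = q^\lambda_{n+1}(0) + \overline{L}^\lambda_{(m,s),n-1}$, and iterates to the closed form $\overline{L}^\lambda_{(m,s),n}(\tilde{\mB}^\lambda,-f^\lambda) = f_m^\lambda(s) - \lambda s + \sum_{k=m+1}^{n+1} q_k^\lambda(0)$, from which both identities drop out by subtraction exactly as you describe.
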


\begin{proof}
The proof is inductive in the same way as the proof of Lemma \ref{lem:stationary_ratios}. The key are the recursions \eqref{eqn:LPP_field_defns} and the identity
\begin{equation} \label{eqn: qIdentity}
q_n^\lambda(T) = \sup_{T \le s < \infty} \left\lbrace \tilde{B}^\lambda_{n-1}(T, s)  + f^\lambda_n(T, s) + \lambda (T - s) \right\rbrace,
\end{equation} 
which follows from \cite[Theorem 3]{OCY:SPA2001}. Thus for $n > m$,
\begin{align*}
&\overline{L}^\lambda_{(m, s), n} (\tilde{\mB}^\lambda, - f^\lambda) = \sup_{s_n > s} \left\lbrace L_{(m, s), (n, s_n)} (\tilde{\mB}^\lambda) + f^\lambda_{n+1}(s_n) - \lambda s_n \right\rbrace \\
&\qquad= \sup_{s_n > s} \left\lbrace \sup_{s < s_{n-1} < s_n} \left( L_{(m, s), (n-1, s_{n-1})}(\tilde{\mB}^\lambda) + \tilde{B}_n(s_{n-1}, s_n) \right) + f^\lambda_{n+1}(s_n) - \lambda s_n \right\rbrace \\
&\qquad= \sup_{s_{n-1} > s} \left\lbrace L_{(m, s), (n-1, s_{n-1})}(\tilde{\mB}^\lambda) + f_{n+1}^\lambda(s_{n-1}) - \lambda s_{n-1} + q_{n+1}^\lambda(s_{n-1}) \right\rbrace \\
&\qquad= \sup_{s_{n-1} > s} \left\lbrace L_{(m, s), (n-1, s_{n-1})}(\tilde{\mB}^\lambda) + f_{n}^\lambda(s_{n-1}) - \lambda s_{n-1} + q^\lambda_{n+1}(0) \right\rbrace \\
&\qquad= q_{n+1}^\lambda(0) + \overline{L}^\lambda_{(m, s), n-1} (\tilde{\mB}^\lambda, - f^\lambda).
\end{align*}
Note that we used \eqref{eqn: qIdentity} to get the $q_{n+1}^\lambda(s_{n-1})$ term in the third line. Iterating this identity gives
\begin{align*}
\overline{L}^\lambda_{(m, s), n}(\tilde{\mB}^\lambda, -f^\lambda) 
= f_m^\lambda(s) - \lambda s + \sum_{k = m + 1}^{n+1} q_k^\lambda(0). 
\end{align*}
From this and the construction of the $f^\lambda, q^\lambda$ fields, the lemma immediately follows.
\end{proof}

\subsection{Comparison Lemma}

As before, we restrict the point-to-line length $\overline{L}^\lambda$ to the event $\{\tau_n < t \}$ or $\{\tau_n > t \}$. For example,
\[
\overline{L}^\lambda_{(m, s), n}(\mB, \overline{\mB}; \tau_n\! <\! t) =  \sup \Bigl\{\sum_{k = m}^n\! B_k(s_{k-1}, s_k ) - \overline{B}_{n+1}(s_n) - \lambda s_n \!: \mathbf{s}_{m,n} \!\in \Pi_{(m, s), n}, s_n < t \Bigr\}.
\]
To shorten notation, write $J_{1, (n, t)}(\mB) = L_{(0, 0), (n, t)}(\mB) - L_{(1, 0), (n, t)}(\mB)$ for the difference in lengths starting from neighboring levels. For $0 < s < t < T$, write 
\[
I_{(s, t), (n, T)}(\mB) = L_{(0, t), (n, T)}(\mB) - L_{(0 ,s), (n, T)}(\mB).
\]
Also write $\overline{J}_{1, n}^\lambda = \overline{L}^{\lambda}_{(0,0),n} - \overline{L}^{\lambda}_{(1,0),n}$ and $\overline{I}^\lambda_{(s,t), n} = \overline{L}^{\lambda}_{(0,t),n} - \overline{L}^{\lambda}_{(0,s),n}$. In this case the comparison lemma follows from a now standard paths crossing argument. See also \cite{CP:LPP_Busemann} for a related argument.

\begin{lemma}
Fix $\lambda > 0$. Let $t > 0$ and $n \in\N$. Then
\begin{equation} \label{eqn: lengthCL1}
\overline{J}^\lambda_{1, n}(\mB, \overline{\mB}; \tau_n < t) \le J_{1, (n, t)}(\mB) \le \overline{J}^\lambda_{1, n}(\mB, \overline{\mB}; \tau_n > t).
\end{equation}
Similarly, let $0 < s < t < T$ and $n \in\Z_+$. Then
\begin{equation} \label{eqn: lengthCL2}
\overline{I}^\lambda_{(s, t), n}(\mB, \overline{\mB}; \tau_n < T) \le I_{(s, t), (n, T)}(\mB) \le \overline{I}^\lambda_{(s, t), n}(\mB, \overline{\mB}; \tau_n > T).
\end{equation}
\end{lemma}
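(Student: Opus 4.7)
The plan is to prove each of the four inequalities by a path-swapping argument: take near-maximizers for the two quantities on one side, locate a level at which the two paths cross, swap their tails past that crossing, and check that the new paths are admissible for the two quantities on the other side. A one-line telescoping identity for the Brownian increments at the crossing level shows that the swap preserves the sum of lengths, and the inequality follows by rearrangement.

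I will spell out the left inequality of \eqref{eqn: lengthCL1} in detail; the other three are structurally identical. Let $\pi^{(0)}$ be a near-maximizer for $\overline{L}^\lambda_{(0,0),n}(\mB,\overline{\mB};\tau_n<t)$ and $\pi^{(1)}$ a near-maximizer for $L_{(1,0),(n,t)}(\mB)$. Write $\tau^{(j)}(k)$ for the time at which $\pi^{(j)}$ jumps from level $k$ to level $k+1$, and set $\tau^{(1)}(0):=0$ for the starting time of $\pi^{(1)}$. Then $\tau^{(0)}(0)>0=\tau^{(1)}(0)$ while $\tau^{(0)}(n)<t=\tau^{(1)}(n)$, so there is a largest $k^*\in\{0,\ldots,n-1\}$ with $\tau^{(0)}(k^*)\ge\tau^{(1)}(k^*)$, and it satisfies $\tau^{(0)}(k^*+1)<\tau^{(1)}(k^*+1)$. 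Define $\hat\pi^{(0)}$ by following $\pi^{(0)}$ through level $k^*$ and $\pi^{(1)}$ from level $k^*+1$ onward, and $\hat\pi^{(1)}$ by the symmetric swap. The chain $\tau^{(1)}(k^*)\le\tau^{(0)}(k^*)<\tau^{(0)}(k^*+1)<\tau^{(1)}(k^*+1)$ shows that both spliced paths have strictly increasing jump times, so $\hat\pi^{(0)}$ is admissible for $L_{(0,0),(n,t)}$ and $\hat\pi^{(1)}$, whose terminal jump occurs at $\tau^{(0)}(n)<t$, is admissible for $\overline L^\lambda_{(1,0),n}(\tau_n<t)$.

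The key accounting is as follows: at every level $k\ne k^*+1$ the total weight contributed by $\{\hat\pi^{(0)},\hat\pi^{(1)}\}$ equals that contributed by $\{\pi^{(0)},\pi^{(1)}\}$, the terminal weight $-\overline{B}_{n+1}(\tau^{(0)}(n))-\lambda\tau^{(0)}(n)$ is carried by exactly one path in either configuration, and at level $k^*+1$ one checks the identity
\begin{align*}
&B_{k^*+1}\bigl(\tau^{(0)}(k^*),\tau^{(1)}(k^*+1)\bigr)+B_{k^*+1}\bigl(\tau^{(1)}(k^*),\tau^{(0)}(k^*+1)\bigr)\\
&\qquad=B_{k^*+1}\bigl(\tau^{(0)}(k^*),\tau^{(0)}(k^*+1)\bigr)+B_{k^*+1}\bigl(\tau^{(1)}(k^*),\tau^{(1)}(k^*+1)\bigr)
\end{align*}
just by expanding both sides as differences of values of $B_{k^*+1}$. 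Hence $\mathrm{len}(\hat\pi^{(0)})+\mathrm{len}(\hat\pi^{(1)})=\mathrm{len}(\pi^{(0)})+\mathrm{len}(\pi^{(1)})$, and combining with $\mathrm{len}(\hat\pi^{(0)})\le L_{(0,0),(n,t)}$ and $\mathrm{len}(\hat\pi^{(1)})\le\overline L^\lambda_{(1,0),n}(\tau_n<t)$ and rearranging yields $\overline J^\lambda_{1,n}(\tau_n<t)\le J_{1,(n,t)}$ after sending the near-maximization error to zero.

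The right inequality of \eqref{eqn: lengthCL1} follows from the same recipe applied to maximizers of $L_{(0,0),(n,t)}$ and $\overline L^\lambda_{(1,0),n}(\tau_n>t)$; the boundary configuration $\tau^{(0)}(0)>0=\tau^{(1)}(0)$ and $\tau^{(0)}(n)=t<\tau^{(1)}(n)$ again forces a crossing, and the swap now produces admissible paths for $\overline L^\lambda_{(0,0),n}(\tau_n>t)$ and $L_{(1,0),(n,t)}$. The two inequalities of \eqref{eqn: lengthCL2} are handled identically, only the paths now share the starting level $0$ but have different starting times $\tau^{(s)}(-1)=s<t=\tau^{(t)}(-1)$, and the crossing is driven by the opposite inequality at the terminal level ($\tau^{(s)}(n)=T>\tau^{(t)}(n)$ in the left case and $\tau^{(t)}(n)=T<\tau^{(s)}(n)$ in the right case). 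The main obstacle is not analytical but purely one of bookkeeping: one must verify that the spliced jump sequences are strictly increasing and satisfy the correct endpoint constraints, and that the weight at the crossing level telescopes as above. Both reduce immediately to the defining inequalities at $k^*$, so no deeper difficulty arises.
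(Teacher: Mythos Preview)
Your proof is correct and is essentially the same paths-crossing argument the paper gives: where the paper picks a space-time point $\my$ at which the two relevant geodesics meet, decomposes each passage time there, and uses superadditivity $L_{a,c}\ge L_{a,b}+L_{b,c}$, you locate the level $k^*$ at which the jump-time ordering flips and swap tails, which is an equivalent and slightly more explicit way of packaging the same idea. Your use of near-maximizers (rather than assuming maximizers exist) and the telescoping check at level $k^*+1$ are fine and make the argument self-contained.
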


\begin{proof}
We omit $\mB, \overline{\mB}$, as it is clear from the context which lengths use which fields. The key to the proof of the upper bound of \eqref{eqn: lengthCL1} is to observe that the two paths which achieve $L_{(0, 0), (n, t)}$ and $\overline{L}^\lambda_{(1, 0), n}(\tau_n > t)$ must cross at some space-time point. For a fixed realization of Brownian motions $\mB$ and $\overline{\mB}$, suppose that the first point that the two paths meet is $\my$, where $(1, 0) \le \my \le (n, t)$. This means that the lengths can be decomposed as
\begin{align*}
L_{(0, 0), (n, t)} = L_{(0, 0), \my} + L_{\my, (n, t)}, \quad \overline{L}^\lambda_{(1, 0), n}(\tau_n > t) = L_{(1, 0), \my} + \overline{L}^\lambda_{\my, n}(\tau_n > t).
\end{align*}
Since the path that achieves $L_{(1, 0), (n, t)}$ does \emph{not} necessarily pass through $\my$, we can only say $L_{(1, 0), (n, t)} \ge L_{(1, 0), \my} + L_{\my, (n, t)}$.  Thus,
\begin{align*}
J_{1, (n,t)} &= L_{(0, 0), (n, t)} - L_{(1, 0), (n, t)} \\
&\le \left( L_{(0, 0), \my} + L_{\my, (n, t)}\right) - \left( L_{(1, 0), \my} + L_{\my, (n, t)} \right)  = L_{(0, 0), \my} - L_{(1,0), \my} \\
& = \left( L_{(0, 0), \my} + \overline{L}^\lambda_{\my, n}(\tau_n > t)\right) - \left( L_{(1, 0), \my} + \overline{L}^\lambda_{\my, n}(\tau_n > t) \right) \\
& \le  \overline{L}^\lambda_{(0, 0), n}(\tau_n > t) -  \overline{L}^\lambda_{(1, 0), n}(\tau_n > t) = \overline{J}^\lambda_{1, n}(\tau_n > t),
\end{align*}
where the final inequality again comes from the observation that $\overline{L}^\lambda_{(0, 0), n}(\tau_n > t) \le L_{(0, 0), \mx} + \overline{L}^\lambda_{\mx, n}(\tau_n > t)$, for any $\mx \ge (0, 0)$.

For the lower bound in \eqref{eqn: lengthCL1}, use that the paths achieving $L_{(1, 0), (n, t)}$ and $\overline{L}^\lambda_{(0, 0), n}(\tau_n < t)$ must cross at some point, say $\mz$. Then,
\begin{align*}
J_{1, (n,t)} &= L_{(0, 0), (n, t)} - L_{(1, 0), (n, t)} \\
&\ge \left( L_{(0, 0), \mz} + L_{\mz, (n, t)}\right) - \left( L_{(1, 0), \mz} + L_{\mz, (n, t)} \right)  = L_{(0, 0), \mz} - L_{(1,0), \mz} \\
& = \left( L_{(0, 0), \mz} + \overline{L}^\lambda_{\mz, n}(\tau_n < t)\right) - \left( L_{(1, 0), \mz} + \overline{L}^\lambda_{\mz, n}(\tau_n < t) \right) \\
& \ge  \overline{L}^\lambda_{(0, 0), n}(\tau_n < t) -  \overline{L}^\lambda_{(1, 0), n}(\tau_n < t) = \overline{J}^\lambda_{1, n}(\tau_n < t).
\end{align*}
The bounds in \eqref{eqn: lengthCL2} are similar, and all that is needed is to determine which two of the four paths involved are forced to cross. 
\end{proof}

\subsection{Shape Theorem for the Passage Time}

The final ingredient necessary to prove Theorem \ref{thm: LPPresult} is the analogue of Lemma \ref{lem:shapeTheoremApplication}, which enables us to remove the restriction to the events $\{\tau_n < nt\}$ or $ \{ \tau_n > nt \}$ in the limit as $n \to \infty$. As in the polymer case, this can be done using the shape theorem
\begin{equation}
\lim_{n \to \infty} n^{-1} L_{(0, 0), (n, nt)} (\mB) = 2 \sqrt{t},
\end{equation}
almost surely, which was proven in \cite{HMO:concentration}. Analogously to Proposition \ref{prop:SLLN_refinement} one can translate this into an almost sure limit statement about the point-to-line passage time. Indeed, fix $s \leq S < T \leq \infty$. Then, almost surely
\[
\lim_{n \to \infty} n^{-1}  \overline{L}_{(m,s),n}^{\lambda}(nS \leq \tau_n \leq nT) = \sup_{S \leq t \leq T} \left \{ 2 \sqrt{t} - \lambda t \right \}.
\]
Note that $2 \sqrt{t} - \lambda t$ is a concave function of $t$ with a unique maximum at $t_\lambda := 1/\lambda^2$. The analogue of Lemma \ref{lem:shapeTheoremApplication} is the following.

\begin{lemma} 
Fix $\lambda>0$, $s>0$, and $m\in\Z$.  If $0<\theta<\lambda^{-2}$, then with probability one
\[
\lim_{n\to\infty} \frac{\bar{L}^\lambda_{(m,s),n}(\mB, \bar{\mB}; \tau_n > n\theta)}{\bar{L}^\lambda_{(m,s),n}(\mB, \bar{\mB})}=1.
\]
If $\theta > \lambda^{-2}$, then with probability one
\[
\lim_{n\to\infty} \frac{\bar{L}^\lambda_{(m,s),n}(\mB, \bar{\mB}; \tau_{n}< n\theta)}{\bar{L}^\lambda_{(m,s),n}(\mB, \bar{\mB})}=1.
\]
\end{lemma}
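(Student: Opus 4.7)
The plan is to apply the point-to-line shape theorem stated immediately before the lemma. Write $h(t) = 2\sqrt{t} - \lambda t$, so the hint reads $n^{-1} \bar{L}^\lambda_{(m,s),n}(nS \leq \tau_n \leq nT) \to \sup_{S \leq t \leq T} h(t)$ almost surely. The function $h$ is strictly concave on $(0,\infty)$ and attains its unique maximum at $t_\lambda := 1/\lambda^2$, with $h(t_\lambda) = 1/\lambda > 0$. The key observation is that when the constraint $\tau_n > n\theta$ (respectively $\tau_n < n\theta$) still admits the deterministic optimal slope $t_\lambda$, the restricted shape constant agrees with the unrestricted one, so both the numerator and denominator in the ratio grow linearly at the same rate $n/\lambda$, forcing the ratio to $1$.

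For the first case, fix $0 < \theta < t_\lambda$. Applying the shape theorem to the numerator with constraint $\tau_n > n\theta$ (and sending $T \uparrow \infty$) gives $n^{-1} \bar{L}^\lambda_{(m,s),n}(\mB, \bar{\mB}; \tau_n > n\theta) \to \sup_{t \geq \theta} h(t)$, and since $t_\lambda > \theta$, this supremum equals $h(t_\lambda) = 1/\lambda$. For the denominator, the full point-to-line quantity can be handled by taking $S \downarrow 0$ (permissible by monotonicity of $\bar{L}^\lambda_n(nS \leq \tau_n \leq nT)$ in $S$ and continuity of $S \mapsto \sup_{t \geq S} h(t)$ at $S = 0$) and $T \uparrow \infty$, yielding $n^{-1} \bar{L}^\lambda_{(m,s),n} \to 1/\lambda$. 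Since both limits equal the same positive number, the ratio converges almost surely to $1$. The second case, $\theta > t_\lambda$, is entirely analogous: we apply the shape theorem with the constraint $\tau_n < n\theta$ to get the limit $\sup_{0 < t \leq \theta} h(t) = 1/\lambda$ (since $t_\lambda < \theta$), and the denominator again tends to $1/\lambda$, giving a ratio of $1$.

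The main technical point is justifying the unrestricted limit $n^{-1} \bar{L}^\lambda_{(m,s),n} \to 1/\lambda$ from the hint, which as stated requires $s \leq S < T \leq \infty$. This amounts to showing that contributions from paths with $\tau_n$ either very small (below $nS$ for $S$ near $0$) or very large (above $nT$ for $T$ large) are asymptotically negligible compared to the bulk growing at rate $n/\lambda$. Both follow from strict concavity of $h$ together with $h(t) < 1/\lambda$ for $t \neq t_\lambda$; in particular $h(t) \to -\infty$ as $t \to \infty$ handles the upper tail, while $h(t) \to 0$ as $t \downarrow 0$ handles the lower tail. Conceptually the whole argument is somewhat simpler than its polymer analogue (Lemma \ref{lem:shapeTheoremApplication}), since there one had to rule out exponential-scale corrections, whereas here it suffices that the restricted and unrestricted passage times share the same linear growth rate.
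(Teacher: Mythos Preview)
Your proposal is correct and matches the approach the paper intends: the paper does not actually prove this lemma, stating only that the Section~\ref{sec:LPP} arguments are analogous to the polymer case and leaving details to the reader, so the implied proof is precisely the transcription of Lemma~\ref{lem:shapeTheoremApplication} that you carry out. Your closing observation is also apt and worth noting: in the LPP setting the sup decomposes as a maximum rather than a sum, so once the restricted and unrestricted passage times share the same positive linear rate $1/\lambda$ the ratio goes to $1$ immediately (indeed, eventually equals $1$ exactly, since $\bar L^\lambda_n=\max\bigl(\bar L^\lambda_n(\tau_n\le n\theta),\bar L^\lambda_n(\tau_n>n\theta)\bigr)$ and the first term has strictly smaller rate), whereas the polymer proof had to exhibit an exponential gap to kill the complementary piece.
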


\appendix
\section{Shape Theorem for the Free Energy}

In \cite{MO:free_energy} the authors prove that the free energy of the point-to-point O'Connell-Yor polymer is
\begin{align}\label{eqn:MO_free_energy}
\lim_{n \to \infty} n^{-1} \log Z_{(0,0), (n,nt)} = \inf_{\lambda > 0} \{ \lambda t - \Psi_0(\lambda) \} =: p(t),
\end{align}
where $\Psi_0 = \Gamma'/\Gamma$ is the digamma function. Note that this result holds for each $t > 0$ and that the limit is in the almost sure sense. The infimum is uniquely achieved at $\lambda_* = \Psi_1^{-1}(t)$, where $\Psi_1 = \Psi_0'$ is the trigamma function. The next theorem extends this result by showing that the asymptotic free energy behaves locally as predicted by convex duality, when the paths have free endpoints but are restricted to go in certain asymptotic directions. We prove the result for two versions of the free endpoint partition function. First recall the partition function $\bar{Z}_{(m,s),n}^{\lambda}(\mB, \bar{\mB})$ from Definition \ref{defn:Z_bar}, which for the rest of this section we shorten to simply $\bar{Z}_{(m,s),n}^{\lambda}$. For $s \leq S < T \leq \infty$ we also let
\begin{align}
\bar{Z}_{(m,s),n}^{\lambda}(S \leq \tau_n \leq T) &= \int_{\Pi_{(m,s),n}} \!\!\!\!\! e^{ \sum_{k=m}^n B_k(s_{k-1}, s_k) - \bar{B}_{n+1}(s_n) - \lambda s_n } \1{S \leq s_n \leq T} \, d \ms_{m,n} \notag \\
&= \int_S^T e^{-\bar{B}_{n+1}(x) - \lambda x} Z_{(m,s), (n,x)}(\mB) \, dx. \label{eqn:barZ_endpoint_in_interval}
\end{align}

\begin{proposition}\label{prop:SLLN_refinement}
Fix $s \leq S < T \leq \infty$. Then almost surely
\[
\lim_{n \to \infty} n^{-1} \log \overline{Z}_{(m,s),n}^{\lambda}(nS \leq \tau_n \leq nT) = \sup_{S \leq t \leq T} \left \{ p(t) - \lambda t \right \}.
\]
\end{proposition}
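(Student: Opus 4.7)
The approach I would take is a Varadhan-type Laplace asymptotic for the integral representation
\[
\bar Z^\lambda_{(m,s),n}(nS\le\tau_n\le nT) = \int_{nS}^{nT} e^{-\bar B_{n+1}(x)-\lambda x}\,Z_{(m,s),(n,x)}(\mB)\,dx
\]
inherited from \eqref{eqn:barZ_endpoint_in_interval}. The strategy is to upgrade the pointwise MO free energy \eqref{eqn:MO_free_energy} to the supremum on $[S,T]$ by combining it on a finite grid with a deterministic sandwich that controls $\log Z_{(m,s),(n,x)}$ across each grid cell. The sandwich comes from splitting the outer integral in Definition \ref{defn:p2l} over the position of the last jump $s_{n-1}$: for $t<t'$,
\[
Z_{(m,s),(n,t')} = e^{B_n(t,t')}Z_{(m,s),(n,t)} + \int_t^{t'} e^{B_n(u,t')}Z_{(m,s),(n-1,u)}\,du \ge e^{B_n(t,t')}Z_{(m,s),(n,t)}.
\]

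For the upper bound I would first assume $T<\infty$. Fix a grid $S=t_0<\cdots<t_k=T$ of mesh $\epsilon$. The sandwich gives $Z_{(m,s),(n,x)}\le e^{-B_n(x,nt_{i+1})}Z_{(m,s),(n,nt_{i+1})}$ on $x\in[nt_i,nt_{i+1}]$, so together with $\lambda x\ge\lambda nt_i$ on that cell,
\[
\int_{nt_i}^{nt_{i+1}} e^{-\bar B_{n+1}(x)-\lambda x}Z_{(m,s),(n,x)}\,dx \le n\epsilon\,Z_{(m,s),(n,nt_{i+1})}\,e^{-\lambda nt_i+M_n^{(i)}},
\]
with $M_n^{(i)}=\sup_{x\in[nt_i,nt_{i+1}]}[-B_n(x,nt_{i+1})-\bar B_{n+1}(x)]$. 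A standard Borel--Cantelli argument using Gaussian tail bounds makes $\max_i M_n^{(i)}/n\to 0$ almost surely. Applying \eqref{eqn:MO_free_energy} at each $t_{i+1}$ and summing the finitely many pieces,
\[
\limsup_{n\to\infty}\frac1n\log\bar Z^\lambda_{(m,s),n}(nS\le\tau_n\le nT) \le \max_{0\le i<k}\bigl[p(t_{i+1})-\lambda t_i\bigr],
\]
and letting $\epsilon\downarrow 0$ together with continuity of $p$ (Lemma \ref{lem:pconcave}) yields the desired upper bound $\sup_{S\le x\le T}[p(x)-\lambda x]$.

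For the lower bound, let $t^\ast\in[S,T]$ achieve the supremum, shifted slightly into the interior at the right endpoint if necessary. The companion inequality $Z_{(m,s),(n,x)}\ge e^{B_n(nt^\ast,x)}Z_{(m,s),(n,nt^\ast)}$ on $x\in[nt^\ast,n(t^\ast+\delta)]$ combined with $\inf_{x\in[nt^\ast,n(t^\ast+\delta)]}[B_n(nt^\ast,x)-\bar B_{n+1}(x)]=o(n)$ gives
\[
\bar Z^\lambda_{(m,s),n}(nS\le\tau_n\le nT)\ge n\delta\,Z_{(m,s),(n,nt^\ast)}\,e^{-\lambda n(t^\ast+\delta)+o(n)}.
\]
Applying \eqref{eqn:MO_free_energy} at $t^\ast$ and sending $\delta\downarrow 0$ produces a matching lower bound.

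The main remaining obstacle is the tail when $T=\infty$: one must show that, for $M$ sufficiently large, the integral over $[nM,\infty)$ is almost surely smaller in exponential scale than the claimed value $\sup_{x\ge S}[p(x)-\lambda x]$. My plan here is to cover $[M,\infty)$ by dyadic annuli $[2^j M,2^{j+1}M]$, apply the finite-$T$ upper-bound argument of the previous paragraph on each annulus, and sum using that $p(x)-\lambda x\to-\infty$ as $x\to\infty$ (since $p(x)=O(\sqrt x)$, visible from $p(t)=\inf_\mu\{\mu t-\Psi_0(\mu)\}$ and the $\mu\to 0^+$ asymptotics of $\Psi_0$). The Gaussian modulus estimates on each dyadic annulus cost only $o(n)$ provided one performs an additional summable Borel--Cantelli step in $j$. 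Together with the $T<\infty$ analysis on $[S,M]$ this closes the argument.
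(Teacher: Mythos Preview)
Your lower bound and your finite-$T$ upper bound are essentially the paper's argument: the ``sandwich'' $Z_{(m,s),(n,t')}\ge e^{B_n(t,t')}Z_{(m,s),(n,t)}$ is exactly the supermultiplicativity \eqref{supermult} specialized to the step $(n,t)\to(n,t')$, and both you and the paper combine this with the pointwise free-energy result \eqref{eqn:MO_free_energy} on a finite grid, together with Gaussian modulus bounds that are $o(n)$.

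The genuine difference, and the place where your sketch has a gap, is the tail $T=\infty$. Your plan is to reapply the finite-$T$ argument on each dyadic annulus $[2^jM,2^{j+1}M]$ and sum in $j$. The trouble is that the finite-$T$ argument only delivers, for each fixed $j$, an almost-sure \emph{limsup} as $n\to\infty$; it gives no control on how large $n$ must be before the bound kicks in, and that threshold depends on $j$ through the invocation of \eqref{eqn:MO_free_energy} at the grid point $t=2^{j+1}M$. To bound $\sum_{j\ge 0}\bar Z^\lambda_n(\text{annulus }j)$ you need a bound valid simultaneously for all $j$ once $n$ is large, i.e.\ a uniform-in-$t$ estimate on $n^{-1}\log Z_{(m,s),(n,nt)}$. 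The pointwise statement \eqref{eqn:MO_free_energy} does not provide this, and a naive first-moment/Markov bound $Z_{(m,s),(n,nt)}\lesssim e^{n(t/2+\log t+1)}$ is too weak when $\lambda$ is small (the linear term $t/2$ competes with $-\lambda t$). Your Borel--Cantelli remark addresses only the Gaussian modulus terms $M_n^{(j)}$; it does not touch this uniformity issue for the partition function itself.

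The paper sidesteps the problem entirely. It first extracts a prefactor $e^{-cnT}$ from the drift and reduces to showing that the \emph{full} integral $\int_0^\infty e^{-\bar B_{n+1}(x)-\lambda x/2}Z_{(0,0),(n,x)}\,dx$ has at most exponential growth in $n$. For that it uses the crude deterministic bound $Z_{(0,0),(n,x)}\le \tfrac{x^n}{n!}\,e^{L_n(x)}$, where $L_n(x)$ is the Brownian last-passage time, together with the \emph{uniform} shape theorem of \cite{HMO:concentration}, which gives $\sup_{y>0}\bigl|n^{-1}L_n(ny)-2\sqrt y\bigr|/(1+y)\to 0$ almost surely. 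This supplies precisely the uniform-in-$t$ control that your dyadic scheme is missing. If you want to repair your argument along its own lines, you would need to import some comparable uniformity --- the LPP shape theorem is the natural source.
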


As the proof will make clear, the extra effect of the term $\bar{B}_{n+1}$ is negligible. Thus if we replaced $\bar{Z}_{(m,s),n}^{\lambda}$ with the partition function
\[
Z_{(m,s),n}^{\lambda}(\mB; S \leq \tau_n \leq T) = \int_{\Pi_{(m,s),n}} \!\!\!\!\! \exp \left \{ \sum_{k=m}^n B_k(s_{k-1}, s_k) - \lambda s_n \right \} \1{S \leq s_n \leq T} \, d \ms_{m,n}
\]
the same result will hold.

To prove Proposition \ref{prop:SLLN_refinement} we will need the following simple fact, which was already used in Lemma \ref{lem:shapeTheoremApplication}.

\begin{lemma}\label{lem:pconcave}
The function $p$ is strictly concave.
\end{lemma}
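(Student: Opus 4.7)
The plan is to exploit the Legendre-transform structure $p(t)=\inf_{\lambda>0}\{\lambda t-\Psi_0(\lambda)\}$, and show strict concavity by verifying that the inner minimizer is unique and depends strictly monotonically on $t$. Concavity itself is automatic (infimum of a family of affine functions of $t$); the content is in the strictness.

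The first step is to record two standard polygamma facts: the series representation $\Psi_1(\lambda)=\sum_{k\ge 0}(k+\lambda)^{-2}$ and its differentiated form $\Psi_1'(\lambda)=-2\sum_{k\ge 0}(k+\lambda)^{-3}<0$. In particular, $\Psi_1$ is a smooth, strictly decreasing bijection of $(0,\infty)$ onto itself, so for each $t>0$ the equation $\Psi_1(\lambda)=t$ has a unique solution, which we denote $\lambda_*(t):=\Psi_1^{-1}(t)$.

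The second step is to analyze the inner minimization. For each fixed $t>0$, the function $f_t(\lambda):=\lambda t-\Psi_0(\lambda)$ satisfies $f_t'(\lambda)=t-\Psi_1(\lambda)$ and $f_t''(\lambda)=-\Psi_1'(\lambda)>0$, so $f_t$ is strictly convex on $(0,\infty)$. Combined with the classical asymptotics $\Psi_0(\lambda)\sim-1/\lambda$ as $\lambda\to 0^+$ and $\Psi_0(\lambda)\sim\log\lambda$ as $\lambda\to\infty$, which force $f_t\to+\infty$ at both endpoints, the infimum defining $p(t)$ is attained at the unique interior critical point $\lambda_*(t)$.

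The final step is to compute $p'$ and conclude. A standard envelope-theorem argument (justified by the joint smoothness of $(t,\lambda)\mapsto f_t(\lambda)$ and the non-degeneracy $f_t''(\lambda_*(t))>0$, which makes $\lambda_*$ smooth via the implicit function theorem) gives $p'(t)=\partial_t f_t(\lambda)\big|_{\lambda=\lambda_*(t)}=\lambda_*(t)$. Since $\Psi_1$ is strictly decreasing, so is its inverse $\lambda_*$; equivalently $p''(t)=\lambda_*'(t)=1/\Psi_1'(\lambda_*(t))<0$. Hence $p$ is strictly concave. I do not expect any serious obstacle here; the only mildly delicate point is invoking the envelope theorem, and this can be bypassed by the following elementary argument if preferred. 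For $t_1<t_2$, $\alpha\in(0,1)$, and $t=\alpha t_1+(1-\alpha)t_2$, evaluating the infima at the common point $\lambda_*(t)$ gives $p(t_1)\le\lambda_*(t)\,t_1-\Psi_0(\lambda_*(t))$ and similarly for $t_2$; summing with weights $\alpha,1-\alpha$ yields $\alpha p(t_1)+(1-\alpha)p(t_2)\le p(t)$. Strictness then follows because $\lambda_*(t_1)\ne\lambda_*(t)$ (by injectivity of $\lambda_*$), so at least one of the two inequalities is strict by uniqueness of the minimizer at $t_1$.
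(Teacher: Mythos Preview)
Your argument is correct. You establish that the infimum in $p(t)=\inf_{\lambda>0}\{\lambda t-\Psi_0(\lambda)\}$ is uniquely attained at $\lambda_*(t)=\Psi_1^{-1}(t)$ by checking strict convexity of $\lambda\mapsto\lambda t-\Psi_0(\lambda)$ and its blow-up at the endpoints, and then deduce $p'(t)=\lambda_*(t)$, which is strictly decreasing. The alternative elementary argument you give at the end is also valid.

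The paper takes a different, more abstract route: it observes that $-p$ is the Legendre--Fenchel conjugate of the proper convex function $f(\lambda)=-\Psi_0(-\lambda)$ on $(-\infty,0)$ (extended by $+\infty$ elsewhere), and then invokes the standard convex-analysis duality that differentiability of $f$ on the interior of its domain forces strict convexity of $f^*$. This is a one-line proof if the reader knows that fact, but it hides the mechanism. Your approach is longer but entirely self-contained, relying only on elementary calculus and explicit polygamma identities; it also yields the extra information $p'(t)=\Psi_1^{-1}(t)$ and $p''(t)=1/\Psi_1'(\Psi_1^{-1}(t))$, which the paper does not extract. In effect, you are proving by hand the special case of the duality theorem that the paper quotes.
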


\begin{proof}
Define a function $f$ by $f(\lambda) = -\Psi_0(-\lambda)$ for $\lambda < 0$, and $f(\lambda) = \infty$ for $\lambda \geq 0$. Then by definition $p$ is the convex dual of $f$. Since $f$ is differentiable it follows that $-p$ is strictly convex, and therefore $p$ is strictly concave.
\end{proof}

\begin{proof}[Proof of Proposition \ref{prop:SLLN_refinement}]
For shorthand we write $\bar{Z}_n^{\lambda} = \bar{Z}_{(m,s),n}^{\lambda}$. We will first prove the lower bound by showing that
\begin{align}\label{eqn:SLLN_refinement_lower_bound}
\liminf_{n \to \infty} n^{-1} \log \overline{Z}_n^{\lambda}(nS \leq \tau_n \leq nT) \geq p(t) - \lambda t
\end{align}
for all $t \in [S,T)$. In the case $T < \infty$ the extension to all $t \in [S,T]$ then follows by continuity. To prove the above fix $\epsilon > 0$ and $t \in [S,T)$. Then for sufficiently large $n$ we have $\epsilon < n(T-t)$, or equivalently $nt + \epsilon < nT$. From this and equation \eqref{eqn:barZ_endpoint_in_interval} defining the partition function we obtain
\begin{align*}
\overline{Z}_n^{\lambda}(nS \leq \tau_n \leq nT) &\geq \int_{nt}^{nt + \epsilon} e^{-\bar{B}_{n+1}(x) - \lambda x} Z_{(0,0),(n,x)} \, dx \\
& = Z_{(0,0), (n,nt)} \int_{nt}^{nt + \epsilon} e^{-\bar{B}_{n+1}(x) - \lambda x} \frac{Z_{(0,0),(n,x)}}{Z_{(0,0), (n,nt)}} \, dx \\
& \geq Z_{(0,0), (n,nt)} \int_{nt}^{nt + \epsilon} e^{-\bar{B}_{n+1}(x) - \lambda x} Z_{(n,nt),(n,x)} \, dx,
\end{align*}
with the last inequality following from the supermultiplicativity property \eqref{supermult} of the partition functions. Now use that
\[
Z_{(n,nt),(n,x)} = e^{B_n(nt,x)}
\]
and lower bound the last integral by its infimum value over the length of the interval. Then take logarithms to obtain
\begin{align*}
\log \overline{Z}^{\lambda}_n(nS \leq \tau_n \leq nT) 
\geq \log \epsilon &+ \log Z_{(0,0), (n,nt)} - \lambda(nt + \epsilon)\\
&+ \!\! \inf_{nt \leq x \leq nt + \epsilon} \!\!\!\! B_n(nt, x) - \sup_{nt \leq x \leq nt + \epsilon} \!\!\!\! \overline{B}_{n+1}(x).
\end{align*}
Now divide both sides by $n$. Standard Gaussian tail bounds and the scaling property of Brownian motion imply that the last two terms will go to zero as $n \to \infty$, and by \eqref{eqn:MO_free_energy} the second term on the right converges to $p(t)$. This proves \eqref{eqn:SLLN_refinement_lower_bound}.

The proof of the upper bound is a tighter version of the above. We first consider the case $T < \infty$ and then later extend to the case $T = \infty$. Fix $k \in \N$ and subdivide the interval $[nS,nT]$ into $k$ equally spaced pieces by letting $x_{n,i} = nS + ni(T-S)/k$ for $0 \leq i \leq k$. First use that
\begin{align*}
\overline{Z}_n^{\lambda}(nS \leq \tau_n \leq nT) &\leq k \max_{1 \leq i \leq k} \int_{x_{n,i-1}}^{x_{n,i}} e^{-\overline{B}_{n+1}(x) - \lambda x} Z_{(0,0),(n,x)} \, dx \\
&\leq k \max_{1 \leq i \leq k} e^{-\lambda x_{n,i-1}} Z_{(0,0),(n, x_{n,i})} \int_{x_{n,i-1}}^{x_{n,i}} \frac{e^{-\overline{B}_{n+1}(x)}}{Z_{(n,x), (n,x_{n,i})}} \, dx,
\end{align*}
with the last inequality again following by the supermultiplicativity property \eqref{supermult}. Now by replacing the integrands with their maximal values over the respective intervals and taking logarithms we obtain
\begin{align*}
\log \overline{Z}_n^{\lambda}(nS \leq \tau_n \leq nT) \leq \log &\, k + \log (n(T-S)/k) + \max_{1 \leq i \leq k} \{ \log Z_{(0,0), (n, x_{n,i})} - \lambda x_{n, i-1} \} \\
&+ \sup_{nS \leq x \leq nT} |\overline{B}_{n+1}(x)| + \max_{1 \leq i \leq k} \sup_{nS \leq x \leq nT} |B_n(x,x_{n,i})| 
\end{align*}
Upon dividing by $n$ the first two terms on the right go to zero for obvious reasons, while the last two go to zero by the same Gaussian tail bounds and scaling properties of Brownian motion as before. Therefore, using \eqref{eqn:MO_free_energy} on the remaining term on the right hand side and the definition of $x_{n,i}$ we have
\begin{align*}
&\limsup_{n \to \infty} n^{-1} \log \overline{Z}_n^{\lambda}(nS \leq \tau_n \leq nT) \\
&\qquad\qquad\leq \max_{1 \leq i \leq k} \{ p(S + i(T-S)/k) - \lambda(S + (i-1)(T-S)/k) \} \\ 
&\qquad\qquad\leq \sup_{S \leq t \leq T} \{ p(t) - \lambda t \} + \lambda (T-S)/k.
\end{align*}
Taking $k \to \infty$ completes the proof in the case $T < \infty$. 

For the case $T = \infty$, first observe that for any $\lambda > 0$ there is a $T$ sufficiently large such that
\[
\sup_{S \leq t \leq T} \{ p(t) - \lambda t \} = \sup_{S \leq t} \{ p(t) - \lambda t \}.
\]
This follows from the fact that $p$ is strictly concave, and thus $p(t) - \lambda t$ has a unique maximum. Now since the statement of the proposition holds for $T < \infty$ and
\[
\overline{Z}_n^{\lambda}(nS \leq \tau_n) = \overline{Z}_n^{\lambda}(nS \leq \tau_n \leq nT) + \overline{Z}_n^{\lambda}(\tau_n > nT), 
\]
it is enough to show that
\[
\lim_{T \to \infty} \limsup_{n \to \infty} n^{-1} \log \overline{Z}_n^{\lambda}(\tau_n > nT) = -\infty.
\]
To this end, first use the bound
\begin{equation} \label{eqn: greaterNT}
\begin{split}
\overline{Z}_n^{\lambda}(\tau_n > nT) 
&= \int_{nT}^{\infty} e^{-\overline{B}_{n+1}(x) - \lambda x} Z_{(0,0), (n,x)} \, dx\\
& \leq e^{-nT/2} \int_{0}^{\infty} e^{-\overline{B}_{n+1}(x) - \lambda x/2} Z_{(0,0), (n,x)} \, dx. 
\end{split}
\end{equation}
It will be enough to show that the integral term grows at most exponentially fast. For this, we can use the bound 
\[
Z_{(0,0),(n,x)} = \int_{\Pi_{(0,0),(n,x)}} \!\!\!\!\!\! \exp \left \{ \sum_{j=0}^{n} B_j(s_{j-1}, s_j) \right \} \, d \ms_{0,n} \leq \int_{\Pi_{(0,0),(n,x)}} \!\!\!\!\!\! e^{L_n(x)} \, d \ms_{0,n} = \frac{x^n}{n!}\cdot e^{L_n(x)},
\] 
where $L_n(x)$ is the maximal energy of a path from $(0,0)$ to $(n,x)$, i.e.
\[
L_n(x) = \sup \Biggl \{ \sum_{j=0}^n B_j(s_{j-1}, s_j) : 0 = s_{-1} < s_0 < \ldots < s_{n-1} < s_n = x \Biggr \}.
\]
Using this bound, Stirling's approximation, and the substitution $y = x/n$, the integral term is:
\begin{align*}
\int_{0}^{\infty} e^{-\overline{B}_{n+1}(x) - \lambda x/2} Z_{(0,0), (n,x)} \, dx & \le \frac{1}{n!} \int_0^\infty x^n e^{- \overline{B}_{n+1}(x) - \lambda x/2} \cdot e^{L_n(x)} \, dx \\
& \le \frac{e^n}{\sqrt{2 \pi n}} \int_0^\infty \left( \frac{x}{n} \right)^n e^{- \overline{B}_{n+1}(x) - \lambda x/2} \cdot e^{L_n(x)} \, dx \\
& \le C e^{2n} \int_0^\infty y^n e^{- \overline{B}_{n+1}(ny) - \lambda ny/2} \cdot e^{L_n(ny)} \, dy.
\end{align*}
We can now argue that $L_n(ny)$ and $- \overline{B}_{n+1}(ny)$ almost surely grow at most linearly in $n$. Theorem 2 of \cite{HMO:concentration} proves that with probability one
\[
\lim_{n \to \infty} \sup_{y > 0} \left| \frac{\frac{1}{n} L_n(ny) - 2 \sqrt{y}}{1+y} \right| = 0. 
\]
Therefore, on a set of full probability there exists an $N_1 = N_1(\omega) < \infty$ such that
\[
L_n(ny) \leq \frac{\lambda}{8}n(1+y) + 2 n \sqrt{y} 
\]
for all $n \geq N_1$ and all $y > 0$. Similarly, it is straightforward to show that there exists a set of full probability on which
\[
-\overline{B}_{n+1}(ny) \leq \frac{\lambda}{8}n(1 + y)
\]
for all $n \geq N_2 = N_2(\omega)$ (with $N_2(\omega) < \infty$) and all $y > 0$. Thus, on a set of full probability 
\begin{align*}
\int_0^{\infty} e^{-\overline{B}_{n+1}(x) - \lambda x/2} Z_{(0,0),(n,x)} \, dx &\leq C e^{2n} \int_0^{\infty} y^n e^{\lambda n (1+y)/4 + 2n \sqrt{y} -\lambda ny/2} \, dy \\
&= C e^{(2 + \lambda/4)n} \int_0^{\infty} y^n e^{-\lambda n y/4 + 2 n\sqrt{y}} \, dy
\end{align*}
for all $n \geq \max \{N_1,  N_2\}$. Thus, there is a constant $K$ (depending on $\lambda$) such that
\[
\limsup_{n \to \infty} n^{-1} \log \left( \int_{0}^{\infty} e^{-\overline{B}_{n+1}(x) - \lambda x/2} Z_{(0,0), (n,x)} \, dx \right) \le K,
\]
almost surely. Using \eqref{eqn: greaterNT}, this gives the final result
\[\lim_{T \to \infty} \limsup_{n \to \infty} n^{-1} \log \overline{Z}_n^\lambda( \tau_n > nT) \le \lim_{T \to \infty}  \left( \frac{-T}{2} + K \right)  = - \infty. \qedhere
\]
\end{proof}

\section*{Acknowledgements}
We thank Chris Janjigian for valuable discussions and several helpful comments on the structure of the paper, and Evan Sorensen for helpful comments and for pointing out typos in an earlier draft. Tom Alberts is partially supported by National Science Foundation grants DMS-1811087 and DMS-1715680. Mackenzie Simper is supported by a National Defense Science \& Engineering Graduate Fellowship. Firas Rassoul-Agha was partially supported by National Science Foundation grants DMS-1407574 and DMS-1811090.

\bibliographystyle{imsart-number}
\bibliography{citations}

\end{document}